\newfont{\bb}{msbm10}
\newtheorem{theorem}{Theorem}[section]
\newtheorem{definition}{Definition}[section]
\newtheorem{lemma}{Lemma}[section]
\newtheorem{remark}{Remark}[section]
\newtheorem{prop}{Proposition}[section]
\numberwithin{equation}{section}
\title{Greedy randomized Bregman-Kaczmarz method for constrained nonlinear systems of equations} 
\author{
	A-Qin Xiao\\
	School of Mathematical Sciences, Tongji University, \\
	Shanghai, 200092, PR China. \\
	Email:xiaoaqin@tongji.edu.cn \\ 
        and \\
	Jun-Feng Yin\thanks{Corresponding author.}\\
	School of Mathematical Sciences, Tongji University, \\ 
 Key Laboratory of Intelligent Computing and Applications,
 Ministry of Education,\\
	Shanghai, 200092, PR China.\\
	Email:yinjf@tongji.edu.cn
}
\begin{document}
 \date{ }
\maketitle

\begin{abstract}
A greedy randomized nonlinear Bregman-Kaczmarz method by sampling the working index with residual information is developed for the solution of the constrained nonlinear system of equations. 
Theoretical analyses prove the convergence of the greedy randomized nonlinear Bregman-Kaczmarz method and its relaxed version.
Numerical experiments verify the effectiveness of the proposed method, which converges faster than the existing nonlinear Bregman-Kaczmarz methods.
\end{abstract}

\noindent{\bf Keywords.}\ Nonlinear systems, Constraints, Randomized Kaczmarz method, Residual information, Bregman projections.

\section{Introduction} \label{sec:intro_grnbk}	
Consider the constrained nonlinear system of equations
\begin{equation}
 \label{eqn:problem}
\hat{x}= \mathop{\mathrm{argmin}}\limits_{x\in C} \varphi(x), \quad \text{s.t. } F(x) = 0,
\end{equation}
where $\varphi(x)$ is a convex function, $F:C \to\mathbb{R}^n $ is a nonlinear differentiable function and $C\subset \mathbb{R}^d$ is a nonempty closed convex set. This nonlinear problem often arises in many scientific and engineering computations, for instance, optimization problems \cite{2011Y}, phase retrieval \cite{2020FS} and deep learning \cite{2022LZB}. 

The nonlinear Kaczmarz method is an effective iterative method for solving the nonlinear system of equations \cite{2022JYN}. 
A randomized nonlinear Kaczmarz method was proposed by choosing the index with probability \cite{2022WLB}. The randomized nonlinear Kaczmarz method was further accelerated by selecting the index corresponding the maximum residual of partial \cite{2023ZBLW} or complete \cite{2023ZWZ} nonlinear equations. 
For more related studies on the nonlinear Kaczmarz methods, we refer the reader to \cite{2021GHT,2022YLG,2024XYb}. 
Recently, for solving the constrained nonlinear problem \eqref{eqn:problem}, by combining the nonlinear Kaczmarz method with general Bregman projections, a nonlinear Bregman-Kaczmarz method was proposed and its expected convergence was given \cite{2024GLW}.  

In this paper, to accelerate the convergence of the nonlinear Bregman-Kaczmarz method, by choosing the working index according to the probability with residual information, a greedy randomized nonlinear Bregman-Kaczmarz method is presented.
The convergence theory of the proposed method and its relaxed version is established under the classical tangential cone conditions. 
Numerical experiments on different nonlinear systems with various constraints show the efficiency of the greedy randomized nonlinear Bregman-Kaczmarz method and the remarkable acceleration in convergence to the solution.  

The rest of this paper is organized as follows. In Section \ref{secprelim_grnbk}, basic properties on convex optimization are introduced.
The greedy randomized nonlinear Bregman-Kaczmarz method and its relaxed version are proposed in Section \ref{secmrnbrgk_grnbk}. The convergence theory of the proposed method and its relaxed version is established in Section \ref{secconvanal_grnbk}. 
Numerical experiments are provided to show the effectiveness of the proposed methods in Section \ref{secnumer_grnbk}.
Finally, some remarks and conclusions are drawn in Section \ref{secconclu_grnbk}.

\section{Preliminaries}\label{secprelim_grnbk}
	\label{sec:cvx-ana-basic-assumption}
Let $\varphi:\mathbb{R}^d\to\overline{\mathbb{R}} :=\mathbb{R}\cup\{+\infty\}$ be a convex function with 
$\text{dom }\varphi = \{x\in\mathbb{R}^d: \varphi(x)<\infty\} \neq\emptyset. $
Assuming that $\varphi$ is lower semicontinuous with
$\varphi(x) \leq \lim\inf\limits_{y\to x} \varphi(y)$  for all $x\in\mathbb{R}^d$, and is {supercoercive}, that is
$ \lim\limits_{\|x\|\to\infty} \frac{\varphi(x)}{\|x\|} = +\infty. $
\begin{definition}\rm
The subdifferential at $x\in\text{dom }\varphi$ is defined as 
$$
\partial\varphi(x)=\big\{x^*\in\mathbb{R}^d:\varphi(x) + \langle x^*,y-x\rangle \leq \varphi(y), \quad y\in \text{dom }\varphi\big\},
$$
where the vector $x^*\in\partial\varphi(x)$ represents a {subgradient} of $\varphi$ at $x$.
and $\text{dom }\partial\varphi$ is set of all points $x$ with $\partial\varphi(x)\neq\emptyset$ is denoted by $\text{dom }\partial\varphi$. 
\end{definition}
Note that the relative interior of $\text{dom }\varphi$ is a convex set, while $\text{dom }\partial\varphi$ may not be convex.
In general, convexity of $\varphi$ guarantees the inclusions 
$\text{ri }\text{dom }\varphi \subset \text{dom }\partial\varphi\subset \text{dom }\varphi$. 
Furthermore, assume that $\varphi$ is {essentially strictly convex} \cite{2024GLW}, meaning that strictly convex on $\text{ri }\text{dom }\varphi$. 
\begin{definition}\rm
The convex conjugate of $\varphi$ is defined as
$$
\varphi^*(x^*) = \sup_{x\in\mathbb{R}^d} \langle x^*,x\rangle - \varphi(x), \qquad x^*\in\mathbb{R}^d, 
$$
where the function $\varphi^*$ is convex and lower semicontinuous.
\end{definition}
In addition, the essential strict convexity and supercoercivity indicate that $\text{dom }\varphi^*=\mathbb{R}^d$ and $\varphi^*$ is differentiable, since $\varphi$ is essentially strictly convex and supercoercive.
\begin{definition} \rm
The Bregman distance \cite{2019FD} between $x$ and $y$ with respect to $\varphi$ and a subgradient $x^*\in\partial \varphi(x)$ is defined as
$$
D_\varphi^{x^*}(x,y) = \varphi(y) - \varphi(x) - \langle x^*,y-x\rangle, \qquad x,y\in\text{dom }\varphi.
$$
\end{definition}
Using Fenchel's equality $\varphi^*(x^*)=\langle x^*,x\rangle - \varphi(x)$ for $x^*\in\partial\varphi(x)$,
the Bregman distance with the conjugate function can be rewritten as
\begin{equation} 
	\label{eqn:BregDist_with_conjugate}
D_\varphi^{x^*}(x,y) = \varphi^*(x^*) - \langle x^*,y\rangle + \varphi(y).
  \end{equation} 	
If $\varphi$ is differentiable at $x$, then the subdifferential $\partial\varphi(x)$ contains the single element $\nabla\varphi(x)$ and it yields that
$$
 D_\varphi(x,y) = D^{\nabla\varphi(x)}_\varphi(x,y) = \varphi(y) - \varphi(x) - \langle \nabla\varphi(x),y-x\rangle. 
$$
\begin{definition}\rm
The function $\varphi$ is called $\sigma$-{strongly} convex with respect to a norm $\|\cdot\|$ for some $\sigma>0$, if for all $x,y\in\text{dom } \partial\varphi$ it satisfies that $\frac{\sigma}{2}\|x-y\|^2\leq D_\varphi^{x^*}(x,y)$. 
 \end{definition}

\begin{definition}\rm\textbf{({Bregman projection}\cite{2019FD}).}\label{defbrgproj}
 Let $E \subset \mathbb{R}^d$ be a nonempty convex set, $E \cap \text{dom }\varphi\neq\emptyset$, $x\in\text{dom }\partial\varphi$ and $x^*\in\partial\varphi(x)$. The Bregman projection of $x$ onto~$E$ with respect to $\varphi$ and $x^*$ is the point $\Pi_{\varphi, E}^{x^*}(x)\in E \cap \text{dom }\varphi$ such that 
	\begin{equation*}
			D_\varphi^{x^*}\big(x,\Pi_{\varphi, E}^{x^*}(x)\big) = \min_{y\in E} D_\varphi^{x^*}(x,y). 
	\end{equation*}
\end{definition}
Note that the existence and uniqueness of the Bregman projection is guaranteed if $E\cap\text{dom }\varphi\neq\emptyset$ by the \cite{2024GLW}[Assumption 1], since the fact that the function $y\mapsto D_\varphi^{x^*}\big(x, y\big)$ is lower bounded by zero, coercive, lower semicontinuous and strictly convex. 
Specially, for the standard quadratic $\varphi=\frac{1}{2}\|\cdot\|_2^2$, the Bregman projection is just the orthogonal projection.
Note that if $E\cap\text{dom }\varphi=\emptyset$, then it holds that $D_\varphi^{x^*}(x,y)=+\infty$ for all $y\in E$. 

Considering the Bregman projections onto hyperplanes 
	\[ H(\alpha,\beta):= \{x\in\mathbb{R}^d: \langle \alpha,x\rangle = \beta\}, \qquad \alpha\in\mathbb{R}^d, \ \beta\in\mathbb{R}, \]
	and halfspaces
\begin{equation*}
		H^{\leq(\geq)}(\alpha,\beta):= \{x\in\mathbb{R}^d: \langle \alpha,x\rangle \leq(\geq) \beta\}, \qquad \alpha\in\mathbb{R}^d, \ \beta\in\mathbb{R}.
\end{equation*}
	
The following proposition shows that the Bregman projection onto a hyperplane can be computed by solving a one-dimensional dual problem under a qualification constraint. Here, this dual problem can be formulated under slightly more general assumptions than previous versions, for example, neither assume smoothness of $\varphi$ \cite{2003BC,2008DT} nor strong convexity of $\varphi$ \cite{2014LSW}.
	
\begin{prop} \rm
\label{prop:t_min_problem_abstract_hyperplane}
 Let $\alpha\in\mathbb{R}^d\setminus\{0\}$ and $\beta\in\mathbb{R}$ such that 
\[ H(\alpha,\beta)\cap \text{ri }\text{dom } \varphi \neq\emptyset. \]
Then, for all $x\in\text{dom }\partial\varphi$ and $x^*\in\partial\varphi(x)$, the Bregman projection $\Pi_{\varphi, H(\alpha,\beta)}^{x^*}(x)$ exists and is unique, which is given by 
$$
x_+:=\Pi_{\varphi, H(\alpha,\beta)}^{x^*}(x) = \nabla\varphi^*(x_+^*), 
$$
where $x_+^* = x^* - \hat t\alpha \in \partial\varphi(x_+)$ and $\hat t$ is a solution to 
\begin{equation}
		\label{eqn:t_min_problem_abstract_hyperplane}
	 \min_{t\in\mathbb{R}} \varphi^*(x^*-t\alpha) + \beta t.
  \end{equation}
\end{prop}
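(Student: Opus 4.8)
The plan is to reduce the Bregman projection onto $H(\alpha,\beta)$ to a convex minimization problem with a single affine equality constraint, dualize it, and recover the projector from the resulting one-dimensional dual problem~\eqref{eqn:t_min_problem_abstract_hyperplane}. First I would use the conjugate form~\eqref{eqn:BregDist_with_conjugate} of the Bregman distance: discarding the $y$-independent term $\varphi^*(x^*)$, computing $\Pi^{x^*}_{\varphi, H(\alpha,\beta)}(x)$ is the same as solving
\[
 \min_{y\in\mathbb{R}^d}\ \big(\varphi(y) - \langle x^*, y\rangle\big)\quad\text{subject to}\quad \langle \alpha, y\rangle = \beta .
\]
Since the hypothesis implies $H(\alpha,\beta)\cap\text{dom }\varphi\neq\emptyset$, the minimizer $x_+$ exists and is unique by the remark after Definition~\ref{defbrgproj}; moreover the optimal value is finite, because $D^{x^*}_\varphi(x,y)\ge 0$ for all $y$ forces the objective to be bounded below by $-\varphi^*(x^*)$.

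Next I would introduce a multiplier $t\in\mathbb{R}$ for the constraint and form the Lagrangian $L(y,t)=\varphi(y)-\langle x^*-t\alpha,\,y\rangle-\beta t$. Its infimum over $y$ is, by the definition of the conjugate,
\[
 g(t)=\inf_{y\in\mathbb{R}^d} L(y,t)=-\sup_{y\in\mathbb{R}^d}\big(\langle x^*-t\alpha,\,y\rangle-\varphi(y)\big)-\beta t=-\varphi^*(x^*-t\alpha)-\beta t ,
\]
so the Lagrangian dual problem $\sup_{t\in\mathbb{R}}g(t)$ is exactly the negative of~\eqref{eqn:t_min_problem_abstract_hyperplane}. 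Because $\varphi$ is lower semicontinuous and supercoercive, this inner infimum is attained, and since $\varphi^*$ is differentiable with $\nabla\varphi^*$ the single-valued inverse of $\partial\varphi$, it is attained precisely at $y=\nabla\varphi^*(x^*-t\alpha)$.

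The main obstacle is strong duality. The assumption $H(\alpha,\beta)\cap\text{ri }\text{dom }\varphi\neq\emptyset$ is exactly the refined Slater condition for a single affine constraint applied to the convex objective $y\mapsto\varphi(y)-\langle x^*,y\rangle$, whose effective domain is $\text{dom }\varphi$; crucially, no smoothness or strong convexity of $\varphi$ is needed for this form of the qualification. Invoking the standard Lagrangian (equivalently, Fenchel) duality theorem of convex analysis, there is no duality gap and the dual supremum is attained, i.e.\ \eqref{eqn:t_min_problem_abstract_hyperplane} has a solution $\hat t$ and $g(\hat t)$ equals the primal optimal value. Getting this constraint-qualification bookkeeping right, together with the finiteness of the primal value noted above, is the only delicate point; everything else is a routine unwinding of definitions.

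Finally I would recover the primal solution from $\hat t$. Put $x_+^*:=x^*-\hat t\alpha$ and $x_+:=\nabla\varphi^*(x_+^*)$; then $x_+^*\in\partial\varphi(x_+)$ and $x_+$ minimizes $L(\cdot,\hat t)$. Differentiating the objective of~\eqref{eqn:t_min_problem_abstract_hyperplane} and using optimality of $\hat t$ gives $\langle\alpha,\nabla\varphi^*(x_+^*)\rangle=\beta$, so $x_+\in H(\alpha,\beta)$ is primal feasible. Hence $\varphi(x_+)-\langle x^*,x_+\rangle=L(x_+,\hat t)=g(\hat t)$, which by strong duality is the primal optimal value; therefore $x_+$ attains $\min_{y\in H(\alpha,\beta)}D^{x^*}_\varphi(x,y)$, and by uniqueness $x_+=\Pi^{x^*}_{\varphi, H(\alpha,\beta)}(x)$. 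This yields the stated formula $x_+=\nabla\varphi^*(x_+^*)$ with $x_+^*=x^*-\hat t\alpha\in\partial\varphi(x_+)$ and $\hat t$ a solution of~\eqref{eqn:t_min_problem_abstract_hyperplane}.
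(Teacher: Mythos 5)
Your argument is correct and is essentially the standard one for this statement: the paper itself states Proposition~\ref{prop:t_min_problem_abstract_hyperplane} without proof (deferring to the cited Bregman--Kaczmarz framework), and the intended justification is exactly the Lagrangian/Fenchel duality reduction you give, with the hypothesis $H(\alpha,\beta)\cap\text{ri}\,\text{dom}\,\varphi\neq\emptyset$ serving as the relative-interior qualification that yields zero duality gap and dual attainment, and with $\text{dom}\,\varphi^*=\mathbb{R}^d$ plus differentiability of $\varphi^*$ giving the recovery $x_+=\nabla\varphi^*(x^*-\hat t\alpha)$. Your bookkeeping of existence/uniqueness via the remark after Definition~\ref{defbrgproj}, feasibility of $x_+$ from $0=\beta-\langle\alpha,\nabla\varphi^*(x^*-\hat t\alpha)\rangle$, and the identification $L(x_+,\hat t)=g(\hat t)$ is all sound, so no gaps to report.
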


\section{The greedy randomized nonlinear Bregman-Kaczmarz method} \label{secmrnbrgk_grnbk}
In this section, the nonlinear Bregman-Kaczmarz method is firstly reviewed and then a greedy randomized nonlinear Bregman-Kaczmarz method as well as its relaxed version are presented for solving problem \eqref{eqn:problem}.

Given an appropriate convex function $\varphi\colon\mathbb{R}^d\to \mathbb{R} \cup \{+\infty\}$ with 
$\overline{\text{dom }\partial\varphi}= C$.
The nonlinear Bregman-Kaczmarz method calculates the {Bregman projection} with respect to $\varphi$ onto the solution set of  the local linearization of a component of equation $F_{i_k}(x)=0$ at the current iterate $x_k$, where the index $i_k$ is uniform randomly chosen from $i_k\in 
\{1,\ldots, n\}$.
Then, the problem \eqref{eqn:problem} can be stated as the following constrained optimization problem
\begin{equation} \label{eq:ourmethodintro}
	x_{k+1} =  \mathop{\mathrm{argmin}}_{x\in\mathbb{R}^d} D_\varphi^{x_k^*}(x_k,x), \qquad \text{s.t. } x\in H_k,
	\end{equation}
with
\begin{equation*}
H_k:= \{x\in\mathbb{R}^d: F_{i_k}(x_k) + \langle \nabla F_{i_k}(x_k), x-x_k\rangle = 0\}= H(\nabla F_{i_k}(x_k), \beta_k),
\end{equation*} 
where
	\begin{equation}
		\label{eqn:beta_k}
		\beta_k = \langle \nabla F_{i_k}(x_k),x_k\rangle - F_{i_k}(x_k).
	\end{equation} 
Since the iterate $x_{k+1}$ is obtained by taking the Bregman projection of $x_k$ onto the set $H_k$ and using Proposition~\ref{prop:t_min_problem_abstract_hyperplane}, which is possible if
	\begin{equation}
		\label{eqn:Condition_hyperplane_nonempty_intersection}
		H_k \cap \text{dom }\partial\varphi\neq\emptyset.
	\end{equation}
Then the nonlinear Bregman-Kaczmarz method updates $x_k$  by $x_{k+1}^* = x_k^*-t_{k,\varphi}\nabla F_{i_k}(x_k)$ and $x_{k+1}=\nabla\varphi^*(x_{k+1}^*)$ with
	\begin{equation}
		\label{eqn:BregProj_stepsize}
		t_{k,\varphi} \in \mathop{\mathrm{argmin}}_{t\in\mathbb{R}} \varphi^*(x_k^*-t\nabla F_{i_k}(x_k)) + \beta_k t,
	\end{equation}
where the working index $i_k$ is sampled randomly from $\{1,2,\cdots,n\}$.

Note that the Bregman projection $x_{k+1}$ is unique,  but $t_{k,\varphi}$ might not be unique. If~\eqref{eqn:Condition_hyperplane_nonempty_intersection} is not satisfied, an update is defined by setting $x_{k+1} = \nabla\varphi^*(x_k^*-t_{k,\sigma}\nabla F_{i_k}(x_k))$
with 
	\begin{equation}
		\label{eqn:mSPS_like_stepsize}
		t_{k,\sigma} =  \sigma\frac{F_{i_k}(x_k)}{\|\nabla F_{i_k}(x_k)\|_*^2},
	 \end{equation}
where $\|\cdot\|_*$ is a norm and the constant $\sigma>0$.
The authors referred to the resulting update as the {relaxed projection} and then proposed a relaxed version of the nonlinear Bregman-Kaczmarz method. 

The convergence properties of the nonlinear Bregman-Kaczmarz method and its relaxed version in a classical setting of nonlinearity are restated in Theorem \ref{thm:convtccNBK}.

\begin{theorem}\rm 
	\label{thm:convtccNBK}
Let \cite[Assumption 1]{2024GLW} hold true and $\varphi$ be $\sigma$-strongly convex. Let each  function $F_i$ satisfies the local tangential cone condition with some $\eta>0$ and $\hat x\in S$ and let $x_0\in B_{r,\varphi}(\hat x)$. Moreover, assume that the Jacobian $F'(x)$ has full column rank for all $x\in B_{r,\varphi}(\hat x)$ and $p_{\min}=\min\limits_{i=1,...,n} p_i>0$. Set
	\begin{equation*}
		\kappa_{\min}  := \min_{x\in B_{r,\varphi}(\hat x)} \min_{\|y\|_2=1} \frac{\|F'(x)\|_F}{ \|F'(x)y\|_2},
		\end{equation*}
where $\|\cdot\|_F$ is the Frobenius norm.		
		\begin{enumerate}
			\item[(i)] If $\eta<\frac12$, then the iterates $x_k$ generated by the relaxed nonlinear Bregman-Kaczmarz method fulfill that 
			\begin{equation} 
				\label{eqn:tcc_linear}
				\frac{\sigma}{2}\mathbb{E}\big[\|x_k-\hat x\|_2^2] 
				\leq \mathbb{E}\big[D_\varphi(x_{k},\hat x)\big] 
				\leq \Big( 1- \frac{ \sigma\big(\frac12-\eta\big)p_{\min}}{M(1+\eta)^2\kappa_{\min}^2} \Big)^k \mathbb{E}\big[D_\varphi(x_0,\hat x)\big].
			\end{equation}
			\item[(ii)] Let $\varphi$ be additionally $M$-smooth and $\eta < \frac{\sigma}{2M}$. Assume that $H_k\cap\text{dom }\varphi\neq\emptyset$ and $x_k$ are the iterates generated by the nonlinear Bregman-Kaczmarz method. Then it holds that 
			\begin{equation} 
				\label{eqn:tcc_linear_Alg2}
				\frac{\sigma}{2}\mathbb{E}\big[\|x_k-\hat x\|_2^2] 
				\leq \mathbb{E}\big[D_\varphi(x_{k},\hat x)\big] 
				\leq \Big( 1- \frac{ \sigma\big(\frac12-\eta\frac{M}{\sigma}\big)p_{\min}}{M(1+\eta)^2\kappa_{\min}^2} \Big)^k \mathbb{E}\big[D_\varphi(x_0,\hat x)\big].
			\end{equation}
		\end{enumerate}
\end{theorem}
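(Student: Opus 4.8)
The plan is to establish a one-step contraction $\mathbb{E}\big[D_\varphi(x_{k+1},\hat x)\mid x_k\big]\le q\,D_\varphi(x_k,\hat x)$ with $q$ equal to the asserted factor, then iterate it and pass to full expectation via the tower property; the leftmost inequality in \eqref{eqn:tcc_linear} and \eqref{eqn:tcc_linear_Alg2} is just the defining inequality of $\sigma$-strong convexity evaluated at $x_k$ and $\hat x$. Throughout I would use the conjugacy dictionary: $\sigma$-strong convexity of $\varphi$ amounts to $\tfrac1\sigma$-smoothness of $\varphi^*$ (w.r.t.\ $\|\cdot\|_*$), and $M$-smoothness of $\varphi$ to $\tfrac1M$-strong convexity of $\varphi^*$. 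A preliminary induction, based on the monotone decrease $D_\varphi(x_{k+1},\hat x)\le D_\varphi(x_k,\hat x)$ coming out of the one-step estimate below and on the fact that $B_{r,\varphi}(\hat x)$ is a Bregman-distance ball around $\hat x$, shows that all iterates remain in $B_{r,\varphi}(\hat x)$, so that the local tangential cone condition and the full-column-rank hypothesis are available at every $x_k$.

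The heart of the argument is the per-iteration estimate. Writing the update as $x_{k+1}^*=x_k^*-t_\star\nabla F_{i_k}(x_k)$ (with $t_\star=t_{k,\sigma}$ in part (i) and $t_\star=t_{k,\varphi}$ in part (ii)) and using the conjugate form \eqref{eqn:BregDist_with_conjugate} of the Bregman distance, one gets the exact identity
\[
D_\varphi(x_{k+1},\hat x)-D_\varphi(x_k,\hat x)=\varphi^*\big(x_k^*-t_\star\nabla F_{i_k}(x_k)\big)-\varphi^*(x_k^*)+t_\star\langle\nabla F_{i_k}(x_k),\hat x\rangle.
\]
For part (i) I would majorize the difference of $\varphi^*$ by $\tfrac1\sigma$-smoothness, substitute $t_{k,\sigma}=\sigma F_{i_k}(x_k)/\|\nabla F_{i_k}(x_k)\|_*^2$ from \eqref{eqn:mSPS_like_stepsize}, and apply the tangential cone condition at $x_k,\hat x$ (with $F_{i_k}(\hat x)=0$) in the form $\big|\langle\nabla F_{i_k}(x_k),x_k-\hat x\rangle-F_{i_k}(x_k)\big|\le\eta|F_{i_k}(x_k)|$; after collecting terms this yields
\[
D_\varphi(x_{k+1},\hat x)\le D_\varphi(x_k,\hat x)-\sigma\Big(\tfrac12-\eta\Big)\frac{F_{i_k}(x_k)^2}{\|\nabla F_{i_k}(x_k)\|_*^2},\qquad\text{valid for }\eta<\tfrac12 .
\]
For part (ii) the step size $t_{k,\varphi}$ is only implicitly defined by \eqref{eqn:BregProj_stepsize}, so I would instead use that $x_{k+1}\in H_k$, i.e.\ $\langle\nabla F_{i_k}(x_k),x_{k+1}-x_k\rangle=-F_{i_k}(x_k)$, together with the three-point identity $D_\varphi(x_k,\hat x)=D_\varphi(x_k,x_{k+1})+D_\varphi(x_{k+1},\hat x)+\langle x_k^*-x_{k+1}^*,x_{k+1}-\hat x\rangle$, whose last term equals $t_{k,\varphi}\delta_k$ with $\delta_k:=\langle\nabla F_{i_k}(x_k),x_k-\hat x\rangle-F_{i_k}(x_k)$ and $|\delta_k|\le\eta|F_{i_k}(x_k)|$. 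Bounding $D_\varphi(x_k,x_{k+1})\ge\tfrac\sigma2\|x_k-x_{k+1}\|^2\ge\tfrac\sigma2 F_{i_k}(x_k)^2/\|\nabla F_{i_k}(x_k)\|_*^2$ by Cauchy--Schwarz, and extracting $|t_{k,\varphi}|\le M|F_{i_k}(x_k)|/\|\nabla F_{i_k}(x_k)\|_*^2$ from $\tfrac1M$-strong convexity of $\varphi^*$ via the identity $\langle x_{k+1}^*-x_k^*,x_{k+1}-x_k\rangle=t_{k,\varphi}F_{i_k}(x_k)$, I obtain
\[
D_\varphi(x_{k+1},\hat x)\le D_\varphi(x_k,\hat x)-\sigma\Big(\tfrac12-\eta\tfrac M\sigma\Big)\frac{F_{i_k}(x_k)^2}{\|\nabla F_{i_k}(x_k)\|_*^2},\qquad\text{valid for }\eta<\tfrac\sigma{2M}.
\]

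It remains to convert the residual term into $D_\varphi(x_k,\hat x)$. Taking the expectation over $i_k$ and using $p_i\ge p_{\min}$ leaves the subtracted quantity at least a constant times $p_{\min}\sum_{i=1}^n F_i(x_k)^2/\|\nabla F_i(x_k)\|_*^2$; I would lower-bound this sum by $\|F(x_k)\|_2^2/\|F'(x_k)\|_F^2$ using $\max_i\|\nabla F_i(x_k)\|_2^2\le\|F'(x_k)\|_F^2$, then apply the tangential cone condition coordinatewise to get $\|F(x_k)\|_2\ge(1+\eta)^{-1}\|F'(x_k)(x_k-\hat x)\|_2$, then use the full-rank hypothesis and the definition of $\kappa_{\min}$ to get $\|F'(x_k)(x_k-\hat x)\|_2\ge\kappa_{\min}^{-1}\|F'(x_k)\|_F\,\|x_k-\hat x\|_2$, and finally invoke the $M$-smoothness of $\varphi$ to bound $\|x_k-\hat x\|_2^2$ from below by a multiple of $D_\varphi(x_k,\hat x)$. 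Chaining these inequalities gives $\mathbb{E}[D_\varphi(x_{k+1},\hat x)\mid x_k]\le q\,D_\varphi(x_k,\hat x)$ with $q$ exactly the factor in \eqref{eqn:tcc_linear}, resp.\ \eqref{eqn:tcc_linear_Alg2}; unrolling the recursion and using $\mathbb{E}[\,\cdot\,]=\mathbb{E}\big[\mathbb{E}[\,\cdot\mid x_k]\big]$ completes the argument.

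The main obstacle is part (ii): since $t_{k,\varphi}$ is only a minimizer of the one-dimensional dual problem \eqref{eqn:BregProj_stepsize}, no closed-form substitution is available, and the estimate rests entirely on turning the smoothness and strong convexity of $\varphi^*$ into two-sided control of $t_{k,\varphi}$ through $\langle x_{k+1}^*-x_k^*,x_{k+1}-x_k\rangle=t_{k,\varphi}F_{i_k}(x_k)$ — this is precisely where the hypothesis $\eta<\sigma/(2M)$ is forced (in place of $\eta<\tfrac12$), and where the well-definedness of the Bregman projection (Proposition~\ref{prop:t_min_problem_abstract_hyperplane}, under $H_k\cap\mathrm{dom}\,\varphi\neq\emptyset$) is used. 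Two secondary but non-negligible points are the invariance of $B_{r,\varphi}(\hat x)$ along the iteration, handled by the monotonicity of $k\mapsto D_\varphi(x_k,\hat x)$, and keeping the norm $\|\cdot\|_*$ consistent with the Euclidean norm in the $\kappa_{\min}$ step, which at worst introduces fixed norm-equivalence constants when $\|\cdot\|_*\ne\|\cdot\|_2$.
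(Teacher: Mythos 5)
Your proposal is correct, and it follows the same template that this paper itself relies on: the paper does not actually prove Theorem~\ref{thm:convtccNBK} (it is restated from \cite{2024GLW}), but its own argument for the analogous Theorem~\ref{thm:Convergence_tcc_linear_rate_grnbk} is precisely your chain --- a one-step descent estimate (Lemma~\ref{lem:Dphi_descent_estimate_tcc}), conditional expectation over the sampled index, the tangential cone condition, the Frobenius-norm/$\kappa_{\min}$ bound, $M$-smoothness to convert $\|x_k-\hat x\|_2^2$ back into $D_\varphi(x_k,\hat x)$, and induction together with the ball-invariance of $B_{r,\varphi}(\hat x)$. Your added value is that you actually derive the descent estimates that the paper only quotes: the $\tfrac1\sigma$-smoothness computation on $\varphi^*$ for the relaxed step $t_{k,\sigma}$, and, for the exact Bregman step, the three-point identity combined with $\langle\nabla F_{i_k}(x_k),x_{k+1}-\hat x\rangle=\delta_k$, $D_\varphi^{x_k^*}(x_k,x_{k+1})\geq\tfrac{\sigma}{2}F_{i_k}(x_k)^2/\|\nabla F_{i_k}(x_k)\|_*^2$ and $|t_{k,\varphi}|\leq M|F_{i_k}(x_k)|/\|\nabla F_{i_k}(x_k)\|_*^2$ from the $\tfrac1M$-strong convexity of $\varphi^*$; all of these check out and correctly locate where $\eta<\tfrac{\sigma}{2M}$ is needed. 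Two minor remarks: your chain in fact produces the contraction factor $1-\tfrac{2\sigma(\frac12-\eta)p_{\min}}{M(1+\eta)^2\kappa_{\min}^2}$ (and the analogous factor in part (ii)), which is a factor of two stronger than the stated rate, so the claimed bound follows a fortiori rather than ``exactly''; and, just as in the paper's own proof of Theorem~\ref{thm:Convergence_tcc_linear_rate_grnbk}, the final conversion step uses $M$-smoothness of $\varphi$ also in part (i), where the theorem statement does not list it explicitly --- an imprecision inherited from the statement, not a flaw in your argument.
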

 
It is known that when the number of iteration steps is large, the iteration process of the nonlinear Bregman-Kaczmarz method and its relaxed version will traverse all the rows of $F(x_k)$, leading to a slowly convergence.
 
In this paper, to accelerate the convergence of the nonlinear Bregman-Kaczmarz method, a greedy strategy is adopted to choose the working row with large entry of the residual vector are grasped at each iteration. Moreover, to save the storage and computational cost, only the residual and its norm are calculated. 
That is, the working row is selected with the probability $p_{i_k}={|r_k^{(i)}|^2}/{\|r_k\|^2}$,
where $r_k= -F(x_k)$ is the residual vector with respect to $x_k$ and $r_k^{(i)}$ is its $i$-th component. For more studies on the selection strategies with residual information, we refer the readers to see \cite{2021HM,2021BW,2023XYZ,2024ZLT}.

Based on this criterion, a greedy randomized nonlinear Bregman-Kaczmarz method is proposed in Algorithm \ref{alg:GRNBK}.

 \begin{algorithm}[!htbp] 
 \caption{Greedy randomized nonlinear Bregman-Kaczmarz method}
 \label{alg:GRNBK}
\begin{algorithmic}[1]
	\Require $x_{0}^*\in \mathbb{R}^d, x_0 = \nabla \varphi^*(x_0^*), r_0= -F(x_0)$ and $\sigma>0$. 
        \Ensure $x_\ell$.
		\For{$k=0,1,\cdots,\ell-1$}
                \State select an index $i_k$ with probability $ p_{i_k}=\frac{|r_k^{(i)}|^2}{\|r_k\|^2}$\			
			\If{$F_{i_k}(x_k)\neq 0$ and $\nabla F_{i_k}(x_k)\neq 0$} \\
			\Comment{otherwise, the component equation is solved already, or $H_k=\emptyset$}
			\State set $\beta_k = \langle \nabla F_{i_k}(x_k),x_k\rangle - F_{i_k}(x_k)$
			\If{ $H_k \cap \text{dom }\partial\varphi\neq\emptyset$ }
			\State  \vspace{-0.3cm}
			$$ \displaystyle      \mbox{ Find $t_k$:} \quad
			t_k \in \mathop{\mathrm{argmin}}_{t\in\mathbb{R}} \, \varphi^*(x_k^*-t\nabla F_{i_k}(x_k)) + t\beta_k 	$$ 
			\vspace{-0.5cm}
			\Else{ set $t_k = \sigma \frac{F_{i_k}(x_k)}{\|\nabla F_{i_k}(x_k)\|_*^2}$ }
			\EndIf 
			\State update $x_{k+1}^*= x_k^* - t_k \nabla F_{i_k}(x_k) $
			\State update $x_{k+1} = \nabla \varphi^*(x_{k+1}^*)$ 
			\EndIf
     \State Compute residual $r_{k+1}= -F(x_{k+1})$
			\EndFor
		\end{algorithmic} 
	\end{algorithm}	

 As an alternative method, the relaxed method which always chooses the stepsize $t_{k,\sigma}$ from \eqref{eqn:mSPS_like_stepsize} is also considered, see Algorithm \ref{alg:rGRNBK}.
 
\begin{algorithm}[!htbp] 
 \caption{Relaxed greedy randomized nonlinear Bregman-Kaczmarz method}
 \label{alg:rGRNBK}
\begin{algorithmic}[1]
	\Require $x_{0}^*\in \mathbb{R}^d, x_0 = \nabla \varphi^*(x_0^*), r_0= -F(x_0)$ and $\sigma>0$. 
        \Ensure $x_\ell$. 
			\For{$k=0,1,\cdots,\ell-1$}
                \State select an index $i_k$ with probability $ p_{i_k}=\frac{|r_k^{(i)}|^2}{\|r_k\|^2}$ 	 
			\If{ $H_k \cap \text{dom }\partial\varphi\neq\emptyset$ }
			\State  set $t_k = \sigma \frac{F_{i_k}(x_k)}{\|\nabla F_{i_k}(x_k)\|_*^2}$  
			 \State update $x_{k+1}^*= x_k^* - t_k \nabla F_{i_k}(x_k) $
			\State update $x_{k+1} = \nabla \varphi^*(x_{k+1}^*)$ 
			\EndIf
     \State Compute residual $r_{k+1}= -F(x_{k+1})$
			\EndFor
		\end{algorithmic} 
	\end{algorithm}

Note that when the function $\varphi(x) = \frac12\|x\|_2^2$, the Algorithm \ref{alg:rGRNBK} method covers the randomized nonlinear Kaczmarz method \cite{2023ZWZ}.

\section{ Convergence analysis}\label{secconvanal_grnbk}
In this section, the convergence properties of the greedy randomized nonlinear Bregman-Kaczmarz method and its relaxed version under a typical settings of the nonlinearity are discussed.
Firstly, some lemmas and definitions are introduced, which are crucial for the following convergence analysis of the proposed method.

\begin{lemma}\rm
	\label{lem:BasicsConvexAnalysis_strongly_convex}
		If $\varphi\colon\mathbb{R}^d\to\mathbb{R}$ is proper, convex and lower semicontinuous, then the following statements are equivalent:  
		\begin{enumerate}[(i)]
			\item $\varphi$ is $\sigma$-strongly convex with respect to $\|\cdot\|$. 
			\item $ \langle x^*-y^*, x-y\rangle \geq \sigma\|x-y\|^2$, $x,y\in\mathbb{R}^d$ and $x^*\in\partial\varphi(x)$, $y^*\in\partial\varphi(y)$.
			\item The function $\varphi^*$ is $\tfrac{1}{\sigma}$-smooth with respect to $\|\cdot\|_*$.
		\end{enumerate}
	\end{lemma}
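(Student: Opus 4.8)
The plan is to prove the three statements equivalent by establishing the cycle $(i)\Rightarrow(ii)\Rightarrow(iii)\Rightarrow(i)$, using convex duality as the main tool so as to avoid any integration-along-segments. Throughout I would use that $\varphi$ proper, convex and lower semicontinuous gives $\varphi^{**}=\varphi$ and the equivalence $x^*\in\partial\varphi(x)\Leftrightarrow x\in\partial\varphi^*(x^*)$, and that by the standing assumptions of Section~\ref{secprelim_grnbk} (essential strict convexity and supercoercivity of $\varphi$, which are also consequences of $\sigma$-strong convexity) the conjugate $\varphi^*$ is finite and differentiable on all of $\mathbb{R}^d$; in particular $\partial\varphi$ is onto and $x^*\in\partial\varphi(x)$ forces $x=\nabla\varphi^*(x^*)$. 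I would also use the Fenchel equality $\varphi^*(x^*)=\langle x^*,x\rangle-\varphi(x)$ for $x^*\in\partial\varphi(x)$ together with its Bregman form \eqref{eqn:BregDist_with_conjugate}.

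For $(i)\Rightarrow(ii)$ I would simply write the strong convexity estimate $\tfrac{\sigma}{2}\|x-y\|^2\le D_\varphi^{x^*}(x,y)$ twice, once as stated and once with $x$ and $y$ swapped, and add the two inequalities; the $\varphi(x)$ and $\varphi(y)$ terms cancel, leaving exactly $\langle x^*-y^*,x-y\rangle\ge\sigma\|x-y\|^2$. For $(ii)\Rightarrow(iii)$, given $u,v\in\mathbb{R}^d$ I would set $x=\nabla\varphi^*(u)$ and $y=\nabla\varphi^*(v)$ (so $u\in\partial\varphi(x)$, $v\in\partial\varphi(y)$), apply $(ii)$, and combine it with the generalized Cauchy--Schwarz inequality $\langle u-v,x-y\rangle\le\|u-v\|_*\|x-y\|$ to get $\sigma\|x-y\|\le\|u-v\|_*$; that is, $\|\nabla\varphi^*(u)-\nabla\varphi^*(v)\|\le\tfrac1\sigma\|u-v\|_*$, which is the $\tfrac1\sigma$-smoothness of $\varphi^*$ with respect to $\|\cdot\|_*$.

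The substantive step is $(iii)\Rightarrow(i)$. Here I would first upgrade the smoothness of $\varphi^*$ to the ``reverse'' Bregman estimate $D_{\varphi^*}(u,v)\ge\tfrac{\sigma}{2}\|\nabla\varphi^*(u)-\nabla\varphi^*(v)\|^2$ for all $u,v\in\mathbb{R}^d$ — the standard consequence of the descent lemma for a convex function with $\tfrac1\sigma$-Lipschitz gradient, obtained by testing the quadratic upper model of $\varphi^*$ at $u$ against the point $u-t\xi$, where $\xi$ is chosen with $\|\xi\|_*=1$ and $\langle\nabla\varphi^*(u)-\nabla\varphi^*(v),\xi\rangle=\|\nabla\varphi^*(u)-\nabla\varphi^*(v)\|$, using that $v$ minimizes $w\mapsto\varphi^*(w)-\langle\nabla\varphi^*(v),w\rangle$, and optimizing over $t>0$. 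Then, for $x^*\in\partial\varphi(x)$ and $y^*\in\partial\varphi(y)$ — so $x=\nabla\varphi^*(x^*)$, $y=\nabla\varphi^*(y^*)$ — combining \eqref{eqn:BregDist_with_conjugate} with Fenchel's equality $\varphi^*(y^*)=\langle y^*,y\rangle-\varphi(y)$ yields the identity $D_{\varphi^*}(y^*,x^*)=D_\varphi^{x^*}(x,y)$, and the reverse estimate applied at $(u,v)=(y^*,x^*)$ becomes $D_\varphi^{x^*}(x,y)\ge\tfrac{\sigma}{2}\|x-y\|^2$, which is $(i)$.

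I expect the main obstacle to be exactly this reverse Bregman estimate used in $(iii)\Rightarrow(i)$: it is a Baillon--Haddad-type inequality, and since $\|\cdot\|$ is a general, possibly non-Euclidean norm it does not follow from the familiar ``$\varphi-\tfrac{\sigma}{2}\|\cdot\|^2$ is convex'' shortcut, so the descent-lemma argument with the dual-aligned direction $\xi$ seems unavoidable. A lesser but genuine technicality, which the conjugate route sidesteps, is that a direct proof of $(ii)\Rightarrow(i)$ by integrating the monotonicity inequality along the segment joining $x$ and $y$ would require subgradients of $\varphi$ at the interior points of that segment, and these need not lie in $\mathrm{dom}\,\partial\varphi$; passing through $\varphi^*$, which is differentiable everywhere, makes that issue disappear.
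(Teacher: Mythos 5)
Your argument is correct, but note that the paper itself offers no proof of this lemma: it is stated as a known preliminary fact (a standard strong convexity/strong smoothness duality result taken from the convex-analysis literature around \cite{2024GLW}), so there is no in-paper argument to compare against. Your cycle $(i)\Rightarrow(ii)\Rightarrow(iii)\Rightarrow(i)$ is sound: the symmetrization in $(i)\Rightarrow(ii)$, the Cauchy--Schwarz step giving the Lipschitz bound $\|\nabla\varphi^*(u)-\nabla\varphi^*(v)\|\leq \tfrac{1}{\sigma}\|u-v\|_*$ in $(ii)\Rightarrow(iii)$, the descent-lemma estimate $D_{\varphi^*}(u,v)\geq\tfrac{\sigma}{2}\|\nabla\varphi^*(u)-\nabla\varphi^*(v)\|^2$ obtained with a norming direction $\xi$ (correctly avoiding the Euclidean shortcut for a general norm), and the identity $D_{\varphi^*}(y^*,x^*)=D_\varphi^{x^*}(x,y)$ via \eqref{eqn:BregDist_with_conjugate} and Fenchel's equality all check out.

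The one soft spot is in $(ii)\Rightarrow(iii)$, where you set $x=\nabla\varphi^*(u)$ for arbitrary $u\in\mathbb{R}^d$ and justify this by the standing assumptions of Section \ref{secprelim_grnbk} (supercoercivity and essential strict convexity), which are not part of the lemma's stated hypotheses. This is acceptable in the paper's context, but if you want the implication self-contained you should derive the needed facts from $(ii)$ itself: strong monotonicity of $\partial\varphi$ together with its maximal monotonicity (valid since $\varphi$ is proper, convex and lower semicontinuous) gives surjectivity of $\partial\varphi$, i.e. $\mathrm{dom}\,\partial\varphi^*=\mathbb{R}^d$, while the Cauchy--Schwarz estimate you already use shows $\partial\varphi^*$ is single-valued (take $u=v$), hence $\varphi^*$ is differentiable everywhere with the claimed Lipschitz gradient. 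With that patch the proof is complete under exactly the hypotheses of the lemma.
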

\begin{lemma}\rm	\label{lem:BasicsConvexAnalysis_convex_Lsmooth}
		If $\varphi\colon\mathbb{R}^d\to\mathbb{R}$ is convex and lower semicontinuous, then the following statements are equivalent: 
		\begin{enumerate}[(i)]
			\item $\varphi$ is $M$-smooth with respect to a norm $\|\cdot\|$, 
			\item $\varphi(y) \leq \varphi(x) + \langle \nabla\varphi(x),y-x\rangle + \frac{M}{2} \|x-y\|^2$ for all $x,y\in\mathbb{R}^d$,
			\item $\langle \nabla\varphi(y) - \nabla\varphi(x),y-x\rangle \leq M\|x-y\|^2$ for all $x,y\in\mathbb{R}^d$.
		\end{enumerate}
\end{lemma}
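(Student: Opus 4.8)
The plan is to recall that ``$\varphi$ is $M$-smooth with respect to $\|\cdot\|$'' means that its gradient is $M$-Lipschitz in the form $\|\nabla\varphi(x)-\nabla\varphi(y)\|_*\le M\|x-y\|$, where $\|\cdot\|_*$ is the dual norm of $\|\cdot\|$, and then to establish the three equivalences by running the cycle $(\mathrm{i})\Rightarrow(\mathrm{iii})\Rightarrow(\mathrm{ii})\Rightarrow(\mathrm{i})$.

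The first two implications are routine. For $(\mathrm{i})\Rightarrow(\mathrm{iii})$ I would simply bound $\langle\nabla\varphi(y)-\nabla\varphi(x),y-x\rangle\le\|\nabla\varphi(y)-\nabla\varphi(x)\|_*\,\|y-x\|\le M\|x-y\|^2$ using the duality pairing inequality together with the Lipschitz bound. For $(\mathrm{iii})\Rightarrow(\mathrm{ii})$ I would invoke the fundamental theorem of calculus, writing $\varphi(y)-\varphi(x)-\langle\nabla\varphi(x),y-x\rangle=\int_0^1\langle\nabla\varphi(x+t(y-x))-\nabla\varphi(x),\,y-x\rangle\,dt$, then apply $(\mathrm{iii})$ to the pair $x$ and $x+t(y-x)$ to get the integrand estimate $\langle\nabla\varphi(x+t(y-x))-\nabla\varphi(x),\,y-x\rangle\le Mt\|y-x\|^2$ for $t>0$, and integrate to recover the constant $\tfrac{M}{2}$.

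The implication $(\mathrm{ii})\Rightarrow(\mathrm{i})$ is where the real work lies; I would argue as in the Baillon--Haddad theorem. Fix $x$ and introduce the shifted function $h:=\varphi-\langle\nabla\varphi(x),\cdot\rangle$, which is convex and differentiable with $\nabla h(x)=0$, hence globally minimized at $x$. Subtracting the linear term from $(\mathrm{ii})$ shows that $h$ itself satisfies the quadratic upper bound $h(w)\le h(y)+\langle\nabla h(y),w-y\rangle+\tfrac{M}{2}\|w-y\|^2$ for all $y,w$. Minimizing the right-hand side over $w$---using here that in finite dimensions the dual norm is attained, so that $\inf_{u}\bigl(\langle g,u\rangle+\tfrac{M}{2}\|u\|^2\bigr)=-\tfrac{1}{2M}\|g\|_*^2$---and comparing with $h(x)$ yields the Bregman cocoercivity estimate $\tfrac{1}{2M}\|\nabla\varphi(y)-\nabla\varphi(x)\|_*^2\le D_\varphi(x,y)$. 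Writing the same estimate with $x$ and $y$ interchanged, adding the two, and using the identity $D_\varphi(x,y)+D_\varphi(y,x)=\langle\nabla\varphi(y)-\nabla\varphi(x),\,y-x\rangle\le\|\nabla\varphi(y)-\nabla\varphi(x)\|_*\|y-x\|$, I would divide through by $\|\nabla\varphi(y)-\nabla\varphi(x)\|_*$ (the degenerate case where it vanishes being trivial) to obtain $\|\nabla\varphi(y)-\nabla\varphi(x)\|_*\le M\|y-x\|$, that is $(\mathrm{i})$.

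The main obstacle is precisely the step $(\mathrm{ii})\Rightarrow(\mathrm{i})$: one must carry out the minimization of the quadratic majorant in a general, possibly non-Euclidean norm, which forces the use of dual-norm attainment and of the fact that the shifted function attains its infimum exactly at the base point (this is where convexity is genuinely used), and one must then pass from the resulting quadratic-in-the-gradient inequality to a linear Lipschitz bound via symmetrization. An alternative, shorter route would be to apply Lemma~\ref{lem:BasicsConvexAnalysis_strongly_convex} to $\varphi^*$ together with $\varphi^{**}=\varphi$ and $\|\cdot\|_{**}=\|\cdot\|$, thereby identifying $M$-smoothness of $\varphi$ with $\tfrac1M$-strong convexity of $\varphi^*$; I would mention this but prefer the self-contained cyclic argument above.
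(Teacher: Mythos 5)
Your proposal is correct. Note, however, that the paper never proves this lemma at all: it is stated as a standard convex-analysis fact (the dual counterpart of Lemma~\ref{lem:BasicsConvexAnalysis_strongly_convex}, imported from the literature), so there is no in-paper argument to compare against; your write-up simply supplies what the paper omits. Your cycle is the standard one and each step checks out: (i)$\Rightarrow$(iii) is the duality-pairing bound, (iii)$\Rightarrow$(ii) is the fundamental-theorem-of-calculus estimate with the integrand bound $Mt\|y-x\|^2$, and (ii)$\Rightarrow$(i) is the Baillon--Haddad/Nesterov argument, where you correctly use that $h=\varphi-\langle\nabla\varphi(x),\cdot\rangle$ is minimized at $x$ by convexity, that $\inf_u\bigl(\langle g,u\rangle+\tfrac{M}{2}\|u\|^2\bigr)=-\tfrac{1}{2M}\|g\|_*^2$ (dual-norm attainment in finite dimensions), and the symmetrization identity $D_\varphi(x,y)+D_\varphi(y,x)=\langle\nabla\varphi(y)-\nabla\varphi(x),y-x\rangle$. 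The only interpretive choice you make is taking ``$M$-smooth'' to mean $\|\nabla\varphi(y)-\nabla\varphi(x)\|_*\le M\|y-x\|$; this is consistent with how the paper uses smoothness (e.g.\ in Lemma~\ref{lem:BasicsConvexAnalysis_strongly_convex}(iii)), and the alternative route you mention --- applying Lemma~\ref{lem:BasicsConvexAnalysis_strongly_convex} to $\varphi^*$ with $\varphi^{**}=\varphi$ --- is arguably the argument closest in spirit to the paper's framework, since it pairs the two lemmas as conjugate duals, at the cost of some care about $\operatorname{dom}\varphi^*$.
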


\begin{lemma}\rm
	\label{lem:quot_diff_estimate}
		Let $a_1,...,a_n\geq 0$ and $b_1,...,b_n>0$. Then it holds that 
		\[ \sum_{i=1}^n \frac{a_i}{b_i} \geq \frac{\sum_{i=1}^n a_i}{\sum_{i=1}^n b_i}. \] 
\end{lemma}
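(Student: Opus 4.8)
The plan is to prove the inequality by a direct termwise comparison rather than anything clever. Set $B := \sum_{j=1}^n b_j$; since every $b_j > 0$ we have $B > 0$, so the right-hand side is well defined, and moreover $0 < b_i \le B$ for each index $i$. Consequently $\frac{1}{b_i} \ge \frac{1}{B}$, and multiplying by $a_i \ge 0$ gives $\frac{a_i}{b_i} \ge \frac{a_i}{B}$ for every $i$. Summing these $n$ inequalities yields
\[
\sum_{i=1}^n \frac{a_i}{b_i} \;\ge\; \sum_{i=1}^n \frac{a_i}{B} \;=\; \frac{\sum_{i=1}^n a_i}{\sum_{i=1}^n b_i},
\]
which is exactly the assertion.

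An alternative route is induction on $n$. The case $n = 1$ is trivial (equality), and the inductive step reduces, after invoking the inductive hypothesis on the first $n$ terms, to the two-summand inequality $\frac{A}{B} + \frac{a}{b} \ge \frac{A+a}{B+b}$ with $A, a \ge 0$ and $B, b > 0$; clearing the strictly positive denominators turns this into $A b^2 + a B^2 \ge 0$, which is obvious.

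There is essentially no obstacle here: the statement is elementary. The only thing to be careful about is where strict positivity of the $b_j$ enters — it is used both to ensure $B = \sum_{j} b_j > 0$ (so that the quotient on the right is meaningful) and to guarantee the pointwise bound $b_i \le B$. If some $b_j$ were allowed to vanish the left-hand side could fail to make sense, and if the $a_i$ were allowed to be negative the termwise comparison would reverse.
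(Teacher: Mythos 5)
Your termwise argument is correct: since each $b_i \le \sum_j b_j$ and $a_i \ge 0$, the bound $\frac{a_i}{b_i} \ge \frac{a_i}{\sum_j b_j}$ sums to the claim, and your bookkeeping of where positivity of the $b_i$ and nonnegativity of the $a_i$ are used is accurate. The paper states this lemma without proof (treating it as a standard auxiliary fact), so there is no authorial argument to compare against; your elementary proof, as well as the inductive alternative you sketch, fully suffices.
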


\begin{definition}\rm(\cite{2022WLB})
   A differentiable function $F: \mathbb{R}^d\to\mathbb{R}^n$ fulfills the 
  {local tangential cone condition}  with constant $0<\eta<1$, if for all $x,y\in \mathbb{R}^d$ it holds that
\begin{equation}
  \label{eqn:TCC1}
  |F(x)+\langle \nabla F(x),y-x\rangle -F(y)| \leq \eta |F(x)-F(y)|
  \end{equation}
  and
  \begin{equation}\label{eqn:TCC2}
  |\langle \nabla F_{i}(x), x - y\rangle| 		
		 \leq {(1+\eta)} |F_{i}(x)-F_{i}(x)|.
  \end{equation}
\end{definition}

\begin{lemma}\rm 
\label{lem:Dphi_descent_estimate_tcc}
Let $\varphi$ be a $\sigma$-strongly convex with respect to a norm~$\|\cdot\|$, there exist $\hat x\in S$, constants $\eta\in (0,1)$ and $r>0$ such that each function $F_i$ satisfies the local tangential cone condition with respect to $\eta$ on 
$B_{r,\varphi}(\hat x) := \big\{x\in C: D_\varphi^{x^*}(x,\hat x) \leq r \quad \text{for all } x^*\in\partial\varphi(x) \big\}$. Then, the iterates of Algorithm~\ref{alg:GRNBK} hold that
 \[ D^{x_{k+1}^*}_\varphi(x_{k+1},\hat x) \leq D^{x_k^*}_\varphi(x_k,\hat x) - \tau \frac{\big(F_{i_k}(x_k)\big)^2}{\|\nabla F_{i_k}(x_k)\|_*^2}, \] 
		if one of the following conditions is fulfilled:
		\begin{enumerate}[(i)]
			\item $t_k=t_{k,\sigma}$, $\eta<\frac12$ and $\tau = \sigma\big(\frac{1}{2}-\eta\big)$,
			\item $t_k=t_{k,\varphi}$, $\varphi$ is additionally $M$-smooth with respect to $\|\cdot\|$, $\eta < \frac{\sigma}{2M}$ and $\tau = \sigma\big(\frac{1}{2}-\eta \frac{M}{\sigma}\big).$
		\end{enumerate}
	In particular, if $x_0\in B_{r,\varphi}(\hat x)$, then in both cases, it has that $x_k\in B_{r,\varphi}(\hat x)$ for all $k$.
\end{lemma}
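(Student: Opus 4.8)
The plan is to establish the one-step descent inequality by comparing the Bregman distance at consecutive iterates, using the update rule $x_{k+1}^* = x_k^* - t_k \nabla F_{i_k}(x_k)$ together with the representation \eqref{eqn:BregDist_with_conjugate} of the Bregman distance via the conjugate $\varphi^*$. First I would write
\[
D^{x_{k+1}^*}_\varphi(x_{k+1},\hat x) - D^{x_k^*}_\varphi(x_k,\hat x)
= \varphi^*(x_{k+1}^*) - \varphi^*(x_k^*) - \langle x_{k+1}^* - x_k^*, \hat x\rangle,
\]
and then substitute $x_{k+1}^* - x_k^* = -t_k \nabla F_{i_k}(x_k)$. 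For the two cases the estimate of $\varphi^*(x_{k+1}^*) - \varphi^*(x_k^*)$ differs: in case (ii) I would use that $\varphi^*$ is $\tfrac1\sigma$-smooth (Lemma~\ref{lem:BasicsConvexAnalysis_strongly_convex}(iii)) to get an upper bound involving $\tfrac{t_k^2}{2\sigma}\|\nabla F_{i_k}(x_k)\|_*^2$, whereas in case (i), since $t_k = t_{k,\sigma}$ is the explicit relaxed stepsize \eqref{eqn:mSPS_like_stepsize}, I would just compute directly and again invoke $\tfrac1\sigma$-smoothness of $\varphi^*$. The term $\langle \nabla F_{i_k}(x_k), \hat x\rangle$ is handled by recalling $\beta_k = \langle \nabla F_{i_k}(x_k), x_k\rangle - F_{i_k}(x_k)$ and $F_{i_k}(\hat x) = 0$, so that $\langle \nabla F_{i_k}(x_k), \hat x - x_k\rangle = \langle \nabla F_{i_k}(x_k), \hat x - x_k\rangle - (F_{i_k}(\hat x) - F_{i_k}(x_k)) - F_{i_k}(x_k)$, and the first difference is controlled by the tangential cone condition \eqref{eqn:TCC1}.

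Next I would assemble these pieces. After substitution one obtains, schematically,
\[
D^{x_{k+1}^*}_\varphi(x_{k+1},\hat x) - D^{x_k^*}_\varphi(x_k,\hat x)
\leq t_k\big(F_{i_k}(x_k) + \text{(linearization error)}\big) + \frac{t_k^2}{2\sigma}\|\nabla F_{i_k}(x_k)\|_*^2,
\]
where the linearization error is bounded in absolute value by $\eta |F_{i_k}(x_k)|$ via \eqref{eqn:TCC1} (using $F_{i_k}(\hat x)=0$). In case (i), plugging in $t_k = \sigma F_{i_k}(x_k)/\|\nabla F_{i_k}(x_k)\|_*^2$ collapses the right-hand side to
$-\big(\sigma(\tfrac12-\eta)\big)\, F_{i_k}(x_k)^2/\|\nabla F_{i_k}(x_k)\|_*^2$ after combining the $-\sigma$ and $+\tfrac\sigma2$ coefficients and absorbing the error term; this gives $\tau = \sigma(\tfrac12-\eta)$, which is positive exactly when $\eta<\tfrac12$. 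In case (ii), $t_k = t_{k,\varphi}$ minimizes \eqref{eqn:BregProj_stepsize}, so I would compare with the suboptimal choice $t = t_{k,\sigma}$ (or more precisely $t = M F_{i_k}(x_k)/(\sigma\|\nabla F_{i_k}(x_k)\|_*^2)$, matching the $M$-smoothness constant) to get the same form of bound with $\tau = \sigma(\tfrac12 - \eta\tfrac{M}{\sigma})$, positive when $\eta < \tfrac{\sigma}{2M}$; here I also need $M$-smoothness of $\varphi$ (Lemma~\ref{lem:BasicsConvexAnalysis_convex_Lsmooth}) to ensure the upper bound on $\varphi^*$ increments is tight enough, and I must check that the case distinction in Algorithm~\ref{alg:GRNBK} (whether $H_k\cap\text{dom }\partial\varphi\neq\emptyset$) does not affect the bound, since in the hyperplane-hitting case $x_{k+1}\in H_k$ forces $F_{i_k}(x_k) + \langle\nabla F_{i_k}(x_k), x_{k+1}-x_k\rangle = 0$.

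Finally, the invariance claim $x_k\in B_{r,\varphi}(\hat x)$ follows by induction: the descent inequality shows $D^{x_{k+1}^*}_\varphi(x_{k+1},\hat x) \leq D^{x_k^*}_\varphi(x_k,\hat x) \leq r$ whenever $x_k\in B_{r,\varphi}(\hat x)$ and $\tau\geq 0$, so the ball is forward-invariant and the tangential cone condition remains applicable at every step, closing the induction. The main obstacle I anticipate is case (ii): tracking the optimality of $t_{k,\varphi}$ carefully enough to extract a clean quadratic-in-$t$ upper bound — one must combine the $\tfrac1\sigma$-smoothness of $\varphi^*$ with the defining minimization of $t_{k,\varphi}$ and the tangential cone estimate without losing constants, and verify that the resulting coefficient genuinely matches $\sigma(\tfrac12-\eta\tfrac M\sigma)$ rather than a weaker constant. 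A secondary subtlety is making sure all Bregman distances are finite and the subgradients $x_k^*$ are well-defined along the iteration, which is where \cite[Assumption 1]{2024GLW} and $\overline{\text{dom }\partial\varphi}=C$ enter.
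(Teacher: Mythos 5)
Your case (i) computation and the closing induction are sound and follow the standard route (note the paper itself states this lemma without proof, importing it from \cite{2024GLW}): write the difference of Bregman distances via \eqref{eqn:BregDist_with_conjugate}, use the $\tfrac1\sigma$-smoothness of $\varphi^*$ from Lemma~\ref{lem:BasicsConvexAnalysis_strongly_convex}, and absorb the linearization error through \eqref{eqn:TCC1} with $F_{i_k}(\hat x)=0$; the correct intermediate bound is $D^{x_{k+1}^*}_\varphi(x_{k+1},\hat x)-D^{x_k^*}_\varphi(x_k,\hat x)\leq -t_kF_{i_k}(x_k)+|t_k|\,\eta\,|F_{i_k}(x_k)|+\tfrac{t_k^2}{2\sigma}\|\nabla F_{i_k}(x_k)\|_*^2$ (your schematic has a sign slip, but your substitution of $t_{k,\sigma}$ and the resulting $\tau=\sigma(\tfrac12-\eta)$ are right), and forward-invariance of $B_{r,\varphi}(\hat x)$ indeed follows by induction once $\tau\geq 0$.

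The genuine gap is in case (ii), precisely the point you flag as unresolved. With $g(t):=\varphi^*(x_k^*-t\nabla F_{i_k}(x_k))+\beta_k t$ one has $D^{x_{k+1}^*}_\varphi(x_{k+1},\hat x)=g(t_{k,\varphi})+t_{k,\varphi}\big(\langle\nabla F_{i_k}(x_k),\hat x\rangle-\beta_k\big)-\langle x_k^*,\hat x\rangle+\varphi(\hat x)$, and minimality of $t_{k,\varphi}$ only controls the first term: comparing with the test stepsize $t_{k,\sigma}$ from \eqref{eqn:mSPS_like_stepsize} and using $\tfrac1\sigma$-smoothness of $\varphi^*$ gives $g(t_{k,\varphi})\leq g(t_{k,\sigma})\leq\varphi^*(x_k^*)-\tfrac{\sigma}{2}\,F_{i_k}(x_k)^2/\|\nabla F_{i_k}(x_k)\|_*^2$. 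The tangential-cone error $\langle\nabla F_{i_k}(x_k),\hat x\rangle-\beta_k=F_{i_k}(x_k)+\langle\nabla F_{i_k}(x_k),\hat x-x_k\rangle$, bounded by $\eta|F_{i_k}(x_k)|$, is multiplied by the \emph{actual} stepsize $t_{k,\varphi}$, not by the test stepsize, so no choice of comparison point can absorb it by itself. The missing ingredient is the bound $|t_{k,\varphi}|\leq M|F_{i_k}(x_k)|/\|\nabla F_{i_k}(x_k)\|_*^2$, obtained from $M$-smoothness of $\varphi$ (equivalently $\tfrac1M$-strong convexity of $\varphi^*$) combined with $x_{k+1}\in H_k$, i.e. $\langle\nabla F_{i_k}(x_k),x_k-x_{k+1}\rangle=F_{i_k}(x_k)$: indeed $\tfrac1M t_{k,\varphi}^2\|\nabla F_{i_k}(x_k)\|_*^2\leq\langle x_k^*-x_{k+1}^*,x_k-x_{k+1}\rangle=t_{k,\varphi}F_{i_k}(x_k)$. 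With this, the error term is at most $\eta M\,F_{i_k}(x_k)^2/\|\nabla F_{i_k}(x_k)\|_*^2$ and $\tau=\sigma\big(\tfrac12-\eta\tfrac M\sigma\big)$ drops out; this is where (and only where) $M$ enters. Your proposed comparison point $t=MF_{i_k}(x_k)/(\sigma\|\nabla F_{i_k}(x_k)\|_*^2)$ does not yield this constant, so as written the case (ii) argument does not close.
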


The Lemma \ref{lem:Dphi_descent_estimate_tcc} provides an important property for the sequence $\{x_k\}_{k=0}^{\infty}$ generated by the iterative scheme of greedy randomized nonlinear Bregman-Kaczmarz method, which is useful for the later convergence analysis.
The convergence of Algorithm \ref{alg:GRNBK} and \ref{alg:rGRNBK} are proved and their upper bound of the convergence rate given in Theorem \ref{thm:Convergence_tcc_linear_rate_grnbk}.

\begin{theorem}
   \rm	\label{thm:Convergence_tcc_linear_rate_grnbk}
	  Let $\eta>0$, $\hat x\in S$, $x_0\in B_{r,\varphi}(\hat x)$ and $\varphi$ be $\sigma$-strongly convex.
 Based on \cite[Assumption 1]{2024GLW} and the Jacobian matrix $F'(x)$ has full column rank for all $x\in B_{r,\varphi}(\hat x)$, it holds that 
	\begin{enumerate}
\item[(i)] If $\eta<\frac12$, then the iterates $x_k$ generated by  Algorithm \ref{alg:rGRNBK} fulfill that 
	\begin{equation} 
			\label{eqn:tcc_linear}
				\frac{\sigma}{2}\mathbb{E}[ \|x_k-\hat x\|_2^2 ]
				\leq \mathbb{E}[ D^{x_{k}^*}_\varphi(x_{k},\hat x) ]
				\leq \Big( 1- \frac{ 2\sigma\big(\frac12-\eta\big)}{n(1+\eta)^2M\kappa_{\min}^2} \Big)^k \mathbb{E}[ D^{x_{0}^*}_\varphi(x_0,\hat x)].
			\end{equation}			
 \item[(ii)] Let $\varphi$ be additionally $M$-smooth, $H_k\cap\text{dom } \varphi\neq\emptyset$ and $\eta < \frac{\sigma}{2M}$. Then, the iterates $x_k$ generated by Algorithm~\ref{alg:GRNBK} satisfy that 
			\begin{equation} 
				\label{eqn:tcc_linear_Alg2}
				\frac{\sigma}{2}\mathbb{E}[ \|x_k-\hat x\|_2^2] 
				\leq  \mathbb{E}[ D^{x_{k}^*}_\varphi(x_{k},\hat x) ]
				\leq \Big( 1- \frac{ 2\sigma\big(\frac12-\eta\frac{M}{\sigma}\big)}{n(1+\eta)^2M\kappa_{\min}^2} \Big)^k  \mathbb{E}[ D^{x_{0}^*}_\varphi(x_0,\hat x)].
			\end{equation}
		\end{enumerate} 
Here,
$\kappa_{\min} := \min\limits_{x\in B_{r,\varphi}(\hat x)} \min\limits_{\|y\|_2=1} \frac{\|F'(x)\|_F}{ \|F'(x)y\|_2}$ and $\|\cdot\|_F$ represents the Frobenius norm.
 \end{theorem}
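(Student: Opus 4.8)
The plan is to prove a one-step linear contraction for the conditional expectation of the Bregman distance and then unroll it. Set $D_k:=D_\varphi^{x_k^*}(x_k,\hat x)$, and take the norm underlying the strong convexity (and, in part (ii), the smoothness) of $\varphi$ to be $\|\cdot\|_2$, so that $\|\cdot\|_*=\|\cdot\|_2$. Because $x_0\in B_{r,\varphi}(\hat x)$, Lemma~\ref{lem:Dphi_descent_estimate_tcc} keeps every iterate in $B_{r,\varphi}(\hat x)$, so at each $x_k$ the tangential cone condition, the full column rank of $F'(x_k)$, the definition of $\kappa_{\min}$, and the $M$-smoothness of $\varphi$ (which is what lets one pass from $\|x_k-\hat x\|_2^2$ back to $D_k$, and is needed in both parts) are available. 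For Algorithm~\ref{alg:rGRNBK} the update always uses the relaxed step $t_{k,\sigma}$, so Lemma~\ref{lem:Dphi_descent_estimate_tcc}(i) gives the per-step descent with $\tau=\sigma(\tfrac12-\eta)$; for Algorithm~\ref{alg:GRNBK} under the hypotheses of (ii) the step $t_{k,\varphi}$ is taken and Lemma~\ref{lem:Dphi_descent_estimate_tcc}(ii) gives it with $\tau=\sigma(\tfrac12-\eta\tfrac M\sigma)$. Conditioning on $x_k$ and averaging over $i_k$ with $p_i=F_i(x_k)^2/\|F(x_k)\|_2^2$ then gives
\[ \mathbb{E}\big[D_{k+1}\mid x_k\big]\ \le\ D_k-\frac{\tau}{\|F(x_k)\|_2^2}\sum_{i=1}^n\frac{F_i(x_k)^4}{\|\nabla F_i(x_k)\|_2^2}. \]

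The first substantive task is to bound this sum from below. Using $\|\nabla F_i(x_k)\|_2^2\le\sum_{j=1}^n\|\nabla F_j(x_k)\|_2^2=\|F'(x_k)\|_F^2$ for each $i$, followed by the Cauchy--Schwarz estimate $\sum_{i=1}^n F_i(x_k)^4\ge\tfrac1n\|F(x_k)\|_2^4$ (Lemma~\ref{lem:quot_diff_estimate} serves equally well for the first reduction), one obtains
\[ \frac{1}{\|F(x_k)\|_2^2}\sum_{i=1}^n\frac{F_i(x_k)^4}{\|\nabla F_i(x_k)\|_2^2}\ \ge\ \frac{\|F(x_k)\|_2^2}{n\,\|F'(x_k)\|_F^2}. \]
This is the step that uses the residual-weighted sampling, and it is the one I expect to demand the most care: the auxiliary inequalities have to be arranged so that exactly the factor $1/n$ and the Frobenius norm $\|F'(x_k)\|_F^2$ come out, the latter so that it cancels against the Jacobian factor produced in the next step.

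Next I would trade $\|F(x_k)\|_2^2$ for $\|x_k-\hat x\|_2^2$. The tangential cone condition at $(x_k,\hat x)$ with $F(\hat x)=0$ gives $\|F(x_k)\|_2\ge(1+\eta)^{-1}\|F'(x_k)(x_k-\hat x)\|_2$, while the full column rank of $F'(x_k)$ and the definition of $\kappa_{\min}$ give $\|F'(x_k)(x_k-\hat x)\|_2\ge\kappa_{\min}^{-1}\|F'(x_k)\|_F\,\|x_k-\hat x\|_2$; hence $\|F'(x_k)\|_F^2$ cancels and the lower bound above becomes $\|x_k-\hat x\|_2^2/\big(n(1+\eta)^2\kappa_{\min}^2\big)$. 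Then $M$-smoothness of $\varphi$ (Lemma~\ref{lem:BasicsConvexAnalysis_convex_Lsmooth}) gives $D_k\le\tfrac M2\|x_k-\hat x\|_2^2$, so combining everything,
\[ \mathbb{E}\big[D_{k+1}\mid x_k\big]\ \le\ \Big(1-\frac{2\tau}{n(1+\eta)^2M\kappa_{\min}^2}\Big)D_k, \]
and the factor in parentheses lies in $(0,1)$ because $0<\tau<\tfrac\sigma2\le\tfrac M2$ and $n,(1+\eta)^2,\kappa_{\min}^2\ge1$.

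Finally, taking total expectations and iterating gives $\mathbb{E}[D_k]\le(1-\rho)^k\,\mathbb{E}[D_0]$ with $\rho=2\tau/\big(n(1+\eta)^2M\kappa_{\min}^2\big)$; substituting $\tau=\sigma(\tfrac12-\eta)$ (valid for $\eta<\tfrac12$) gives the upper bound of part (i) and $\tau=\sigma(\tfrac12-\eta\tfrac M\sigma)$ (valid for $\eta<\tfrac\sigma{2M}$) that of part (ii), and the left-hand inequalities $\tfrac\sigma2\mathbb{E}[\|x_k-\hat x\|_2^2]\le\mathbb{E}[D_k]$ are immediate from $\sigma$-strong convexity. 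Thus the two parts differ only in which branch of Lemma~\ref{lem:Dphi_descent_estimate_tcc} supplies $\tau$. Besides the bookkeeping in the second paragraph, the one delicate point is to apply the $\kappa_{\min}$ estimate in the direction that produces a \emph{lower} bound on $\|F'(x_k)(x_k-\hat x)\|_2$ in terms of $\|F'(x_k)\|_F\,\|x_k-\hat x\|_2$, which is precisely what the full-rank hypothesis and the definition of $\kappa_{\min}$ deliver on $B_{r,\varphi}(\hat x)$.
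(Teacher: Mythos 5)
Your proposal is correct and follows essentially the same route as the paper's proof: a one-step conditional-expectation bound from Lemma~\ref{lem:Dphi_descent_estimate_tcc} with the residual-weighted probabilities, the same chain of estimates (per-term/Lemma~\ref{lem:quot_diff_estimate} reduction plus Cauchy--Schwarz to get $\|F(x_k)\|_2^2/(n\|F'(x_k)\|_F^2)$, then the tangential cone condition, the $\kappa_{\min}$ bound, and $M$-smoothness to return to $D_k$), followed by taking total expectations and unrolling with the two values of $\tau$. Your explicit conditioning on $x_k$ and your observation that $M$-smoothness is in fact used in both parts are, if anything, slightly more careful than the paper's write-up, but they do not change the argument.
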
  
\begin{proof}  
From Lemma \ref{lem:Dphi_descent_estimate_tcc}, it holds that 
$$
D^{x_{k+1}^*}_\varphi(x_{k+1},\hat x) \leq D^{x_k^*}_\varphi(x_k,\hat x)- \tau \frac{\big(F_{i_k}(x_k)\big)^2}{\|\nabla F_{i_k}(x_k)\|_2^2}.
$$
By taking the full expectation on both sides of the above formula, we have
\begin{equation*} 
  \begin{aligned} 
  \mathbb{E}[D^{x_{k+1}^*}_\varphi(x_{k+1},\hat x)]& \leq \mathbb{E}[D^{x_k^*}_\varphi(x_k,\hat x)]- \sum_{i=1}^n\tau \frac{\big|F_{i_k}(x_k)\big|^2}{\| F(x_k)\|_2^2} \frac{\big(F_{i_k}(x_k)\big)^2}{\|\nabla F_{i_k}(x_k)\|_2^2}\\
  & \leq \mathbb{E}[D^{x_k^*}_\varphi(x_k,\hat x)]-  \frac{\sum_{i=1}^n \tau \big|F_{i_k}(x_k)\big|^4}{\| F(x_k)\|_2^2\sum_{i=1}^n\|\nabla F_{i_k}(x_k)\|_2^2}\\
  & \leq \mathbb{E}[D^{x_k^*}_\varphi(x_k,\hat x)]-  \frac{\sum_{i=1}^n \tau \big|F_{i_k}(x_k)\big|^2\sum_{i=1}^n  \big|F_{i_k}(x_k)\big|^2}{n\| F(x_k)\|_2^2 \| F'(x_k)\|_F^2}\\
  & \leq \mathbb{E}[D^{x_k^*}_\varphi(x_k,\hat x)]-  \frac{\sum_{i=1}^n \tau \big|F_{i_k}(x_k)-F_{i_k}(\hat x) \big|^2}{n \| F'(x_k)\|_F^2}.
\end{aligned}		
  \end{equation*}
From above inequality, the local tangential cone
condition \eqref{eqn:TCC2} and using \ref{lem:quot_diff_estimate}, we have
 
\begin{equation*} 
  \begin{aligned} 
\mathbb{E}[D^{x_{k+1}^*}_\varphi(x_{k+1},\hat x)] 
%&\leq \mathbb{E}[D^{x_k^*}_\varphi(x_k,\hat x)] - \frac{\sum_{i=1}^n \tau |F_{i_k}(x_k)-F_{i_k}(\hat x)|^2}{n\| F'(x_k)\|_F^2}\\
&\leq \mathbb{E}[D^{x_k^*}_\varphi(x_k,\hat x)] - \frac{\sum_{i=1}^n\frac{\tau}{(1+\eta)^2}|\langle\nabla F_{i_k}(x_k), x_k-\hat{x}\rangle|^2}{n\| F'(x_k)\|_F^2}\\
&\leq\mathbb{E}[D^{x_k^*}_\varphi(x_k,\hat x)] -\frac{\tau}{(1+\eta)^2} \frac{\| F'(x_k)(x_k-\hat{x})\|^2}{n\| F'(x_k)\|_F^2}\\  
&\leq \mathbb{E}[D^{x_k^*}_\varphi(x_k,\hat x)] -\frac{\tau}{n(1+\eta)^2\kappa^2_{\min}} \mathbb{E}[{\| x_k-\hat{x}\|^2}].
  \end{aligned}		
\end{equation*}
From $\varphi$ is $M$-smooth and \ref{lem:BasicsConvexAnalysis_convex_Lsmooth} (ii), we have
\begin{equation*}
\begin{aligned}		
    \mathbb{E}[D^{x_{k+1}^*}_\varphi(x_{k+1},\hat x)] &\leq \mathbb{E}[D^{x_k^*}_\varphi(x_k,\hat x)] -\frac{2\tau}{n(1+\eta)^2M\kappa^2_{\min}} \mathbb{E}[D^{x_k^*}_\varphi(x_k,\hat x)]\\
    &\leq \left(1-\frac{2\tau}{n(1+\eta)^2M\kappa^2_{\min}}\right) \mathbb{E}[D^{x_k^*}_\varphi(x_k,\hat x)]. 
\end{aligned}		
\end{equation*}
By induction and incorporating $\tau$ introduced in Lemma \ref{lem:Dphi_descent_estimate_tcc}, we complete the proof.
\end{proof}

\begin{remark}\rm
For $\varphi(x)=\frac{1}{2}\|x\|_2^2$, the part (i) of Theorem \ref{thm:Convergence_tcc_linear_rate_grnbk} recovers the result from \cite[Theorem 2]{2022WLB} as a special case. 
In Theorem \ref{thm:Convergence_tcc_linear_rate_grnbk}, unfortunately a more pessimistic rate obtained for Algorithm \ref{alg:GRNBK} compared with Algorithm \ref{alg:rGRNBK}, since the $\tau$ in (ii) is upper bounded by the $\tau$ in (i).
\end{remark}

\section{Numerical experiments} \label{secnumer_grnbk}
In this section, numerical experiments are presented to verify the efficiency of
the greedy randomized nonlinear Bregman-Kaczmarz method (abbreviated as `GRNBK') and its relaxed version (abbreviated as `rGRNBK') compared with nonlinear Bregman-Kaczmarz method (abbreviated as `NBK') and its relaxed version (abbreviated as `rNBK') proposed in \cite{2024GLW}.
 
\subsection{Sparse solutions of quadratic equations}\label{problem_LSQE}
Consider multinomial quadratic equations
\begin{equation}\label{eqmultiquadeq}
F_i(x) = \frac12\langle x,A^{(i)}x\rangle + \langle b^{(i)},x\rangle + c^{(i)}=0, 
\end{equation}
where $A^{(i)}\in\mathbb{R}^{d\times d}$, $b^{(i)}\in\mathbb{R}^d$, 
$c^{(i)}\in\mathbb{R}$ and $i=1,\cdots,n$. 
To find a sparse solution $\hat x\in\mathbb{R}^d$ of \eqref{eqmultiquadeq}, we adopt the nonsmooth distance generating function $\varphi(x)=\lambda\|x\|_1+\frac12\|x\|_2^2$ with sparsity parameter $\lambda>0$.
Since it holds $\text{dom }\varphi = \mathbb{R}^d$, it is always possible to choose the stepsize $t_{k,\varphi}$ from \eqref{eqn:BregProj_stepsize} in NBK and GRNBK method. Moreover, the stepsize can be computed exactly by a sorting procedure, as $\varphi^*$ is a continuous piecewise quadratic function, see \cite[Example 3.2]{2024GLW}. 

To guarantee existence of a sparse solution, we select a sparse vector $\hat x\in\mathbb{R}^d$, the data $A^{(i)}$ and $b^{(i)}$ generated with entries from the standard normal distribution and set 
$$
 c^{(i)} = - \Big( \frac{1}{2}\langle \hat{x},A^{(i)}\hat{x}\rangle + \langle b^{(i)},\hat{x}\rangle \Big). 
$$
In all iterations, the nonzero part of $\hat x$ and the initial subgradient $x_0^*$ are sampled from the standard normal distribution. The initial vector $x_0$ is computed by $x_0=\nabla\varphi^*(x_0^*) = S_\lambda(x_0^*)$.
The iterations terminate when the norm of residual $\left\|F(x_k) \right\|_2\leq 10^{-12}$ or the number of iterations exceeds 1000.
  
\begin{figure}[htb] 
\centering 
\vspace{-0.4cm}  
 \subfigtopskip=1pt  
\subfigbottomskip=0.1pt  
\subfigcapskip=-5pt 
	\subfigure[]
	{
		\begin{minipage}[t]{0.48\linewidth}
			\centering
			\includegraphics[width=1\textwidth]{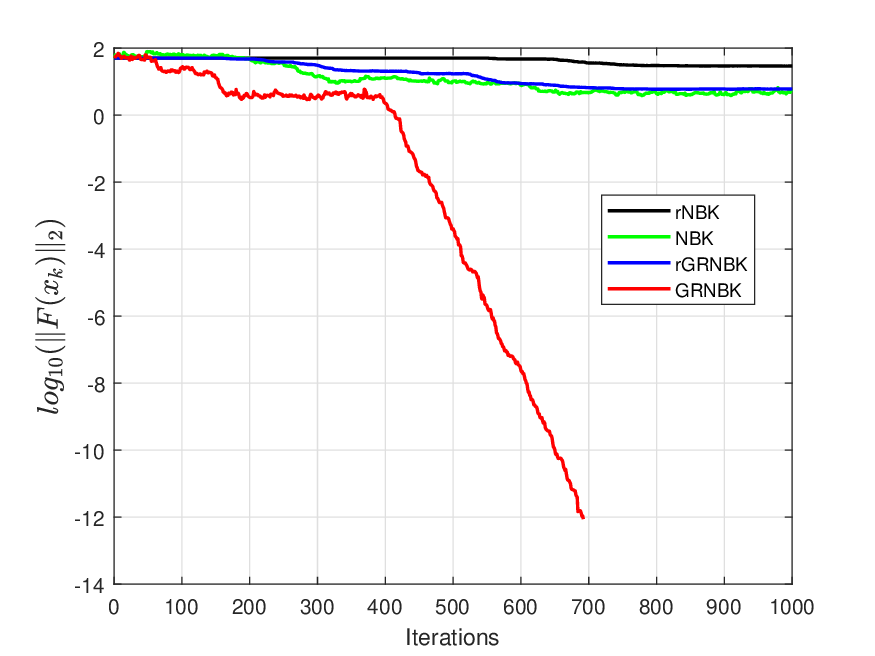}
		\end{minipage}
	}
	\subfigure[]
	{
		\begin{minipage}[t]{0.48\linewidth}
			\centering 
			\includegraphics[width=1\textwidth]{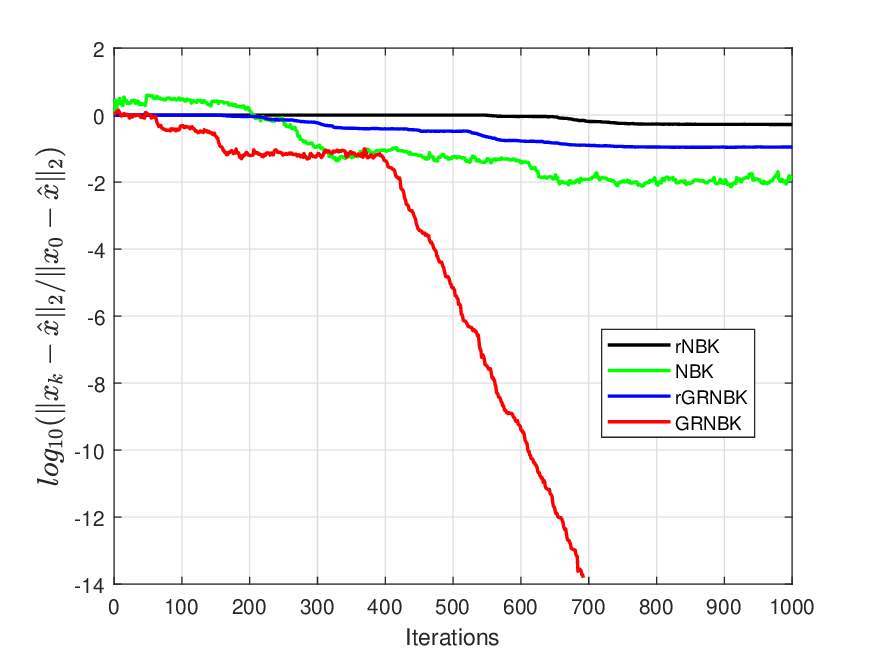}
		\end{minipage}
	}
  \caption{ Convergence curves of residual versus iterations for Problem \ref{problem_LSQE} with $(n,d)=(500, 100),s=10, \lambda=5$.} 
		\label{fig:reserrvsitcpuEx1ovlam5_grnbk}
\end{figure}

For a example, choosing $A^{(i)}\sim\mathcal{N}(0,1)^{500\times 500}$ for $i=1,...,100$, $\hat x$ with $10$ nonzero entries and $\lambda=5$. 
In Figure \ref{fig:reserrvsitcpuEx1ovlam5_grnbk}, the curves of the norm of the residual versus the number of iteration steps and the solution error versus the number of iteration steps are plotted, respectively.

From Figure \ref{fig:reserrvsitcpuEx1ovlam5_grnbk}, it is seen that the GRNBK method clearly converges faster than the NBK method and the rGRNBK method  converges faster than the rNBK method, which implies that the residual-based greedy strategies are effective and can greatly improve the convergence speed of nonlinear Bregman-Kaczmarz method.
It is seen from Figure \ref{fig:reserrvsitcpuEx1ovlam5_grnbk} that the convergence curves for the GRNBK method decreases much faster than those of its relaxed versions, which shows that the computation of the $t_{k,\varphi}$ step size for GRNBK method pay off.

\subsection{Linear systems on the probability simplex}\label{problem_LSPS}
Consider linear systems constrained to the probability simplex
\begin{equation}
	\label{eqn:linear_system_on_probability_simplex} 
	\text{find } x\in\Delta^{d-1}, \text{ s.t. } Ax=b.
\end{equation}
That is, choosing $F_i=\langle a_i,x\rangle - b_i$ and viewing $C=\Delta^{d-1}= \{x\in\mathbb{R}_{\geq 0}^d:\ \sum_{i=1}^d x_i=1\}$ as the additional constraint in problem \eqref{eqn:problem}. For NBK and GRNBK methods, the simplex-restricted negative entropy function from \cite[Example 3.3]{2024GLW} is used, which is given by
$$ 
\varphi(x) = \begin{cases}
\sum_{i=1}^d x_i \log(x_i), & x\in\Delta^{d-1}, \\
	+\infty, & \text{otherwise.}
\end{cases}  
$$
It is known from \cite[Example 3.3]{2024GLW} that $\varphi$ is $1$-strongly convex with respect to the $1$-norm $\|\cdot\|_1$. Therefore, rNBK and rGRNBK methods with $\sigma=1$ and $\|\cdot\|_*=\|\cdot\|_\infty$ are considered.   
Note that it holds $\nabla f_{i_k}(x)=a_{i_k}$ for all $x$ and $\beta_k = b_{i_k}$ in NBK and GRNBK method. 
If problem \eqref{eqn:linear_system_on_probability_simplex} has a solution, then condition \eqref{eqn:Condition_hyperplane_nonempty_intersection} is fulfilled in each step of NBK and GRNBK method, so these methods take always the stepsize $t_k=t_{k,\varphi}$ from the exact Bregman projection.  

Let the right-hand side be $b=A\hat x$ with the solution $\hat x$ drawn from the uniform distribution on the probability simplex $\Delta^{d-1}$. All iterations start from the center point $x_0=(\frac{1}{d}, ..., \frac{1}{d})$ and terminate when the norm of relative residual $\left\|Ax_k-b \right\|_2/\left\|Ax_0-b \right\|_2\leq 10^{-9}$ or the number of iterations exceeds 10,000.

In the first setting, the matrix $A$ is generated from standard normal distribution. Specifically, $A\sim\mathcal N(0,1)^{n\times d}$ with $(n,d) = (400,300)$ in the overdetermined case and $(n,d)=(300,400)$ in the underdetermined case, respectively. 

From Figure \ref{fig:resvsitEx2stdnorm}, it is seen that the GRNBK method converges faster than NBK method and the rGRNBK method converges faster than rNBK method both in the overdetermined and underdetermined case, which shows the  advantage of the greedy strategy. 

\begin{figure}[!htbp] 
\centering 
\vspace{-0.4cm} 
 \subfigtopskip=1pt  
\subfigbottomskip=0.1pt  
\subfigcapskip=-5pt 
	\subfigure[]
	{
		\begin{minipage}[t]{0.48\linewidth}
			\centering
			\includegraphics[width=1\textwidth]{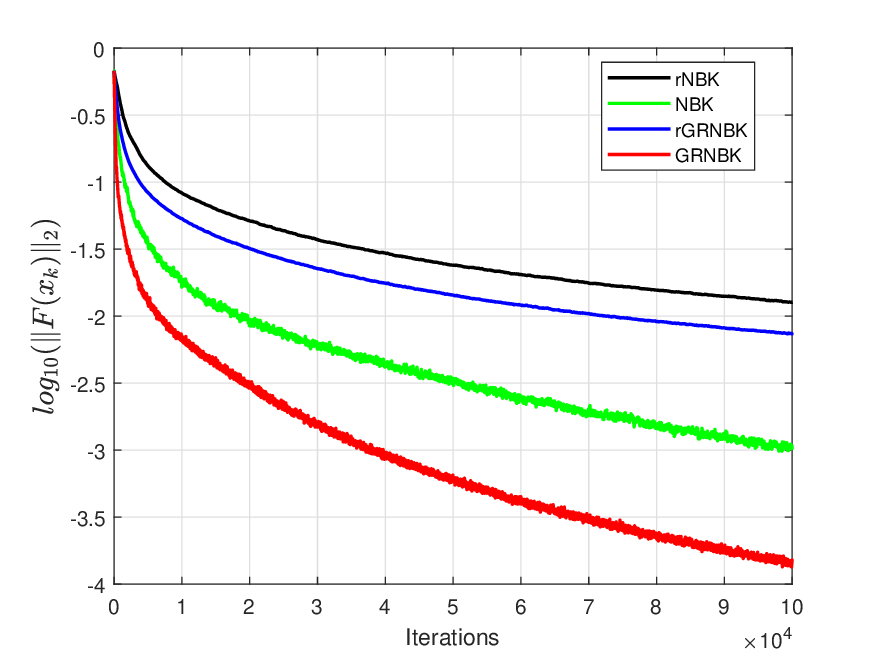}
		\end{minipage}
	}
	\subfigure[]
	{
		\begin{minipage}[t]{0.48\linewidth}
			\centering 
			\includegraphics[width=1\textwidth]{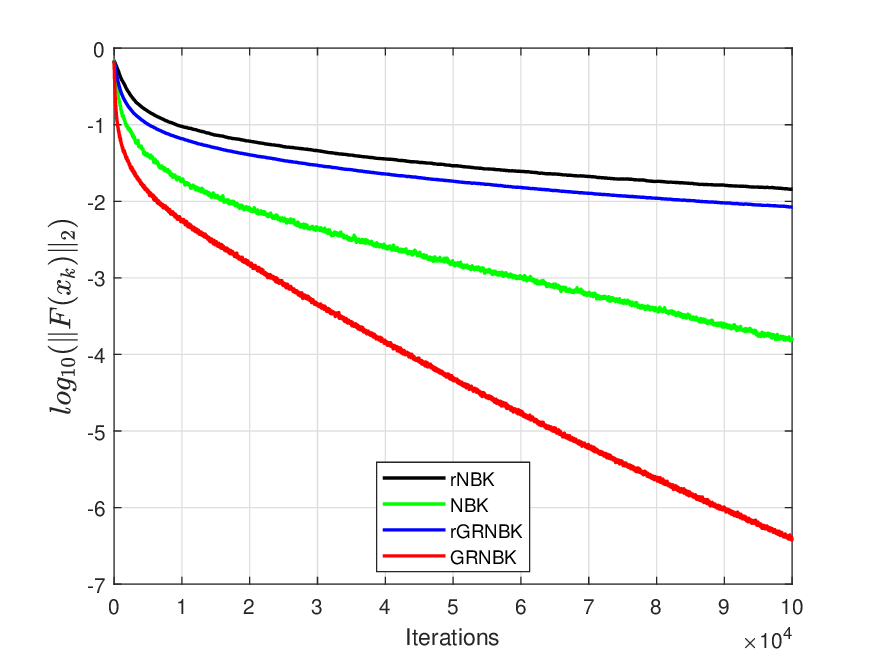}
		\end{minipage}
	}
  \caption{ Convergence curves of residual versus iterations for Problem \ref{problem_LSPS} with Left: $A\sim\mathcal{N}(0,1)^{400\times300}$ and Right: $A\sim\mathcal{N}(0,1)^{300\times400}$.} 
		\label{fig:resvsitEx2stdnorm}
\end{figure} 

In the second setting, to study the impact of the distribution of entries of $A$ on convergence quality of the methods, the tested matrix $A$ is generated from uniformly distributed entries $A\sim\mathcal{U}([0,1])^{n\times d}$ and $A\sim\mathcal{U}([0.9,1])^{n\times d}$ with $(n,d)=(300,400)$, respectively.  
From Figure \ref{fig:resvsitcpuEx2unif_grnbk}, it is seen that the
redundant rows of the matrix $A$ don not deteriorate the convergence of the GRNBK method. 
\begin{figure}[htb] 
\centering 
\vspace{-0.4cm} 
 \subfigtopskip=1pt  
\subfigbottomskip=0.1pt  
\subfigcapskip=-5pt 
	\subfigure[]
	{
		\begin{minipage}[t]{0.48\linewidth}
			\centering
			\includegraphics[width=1\textwidth]{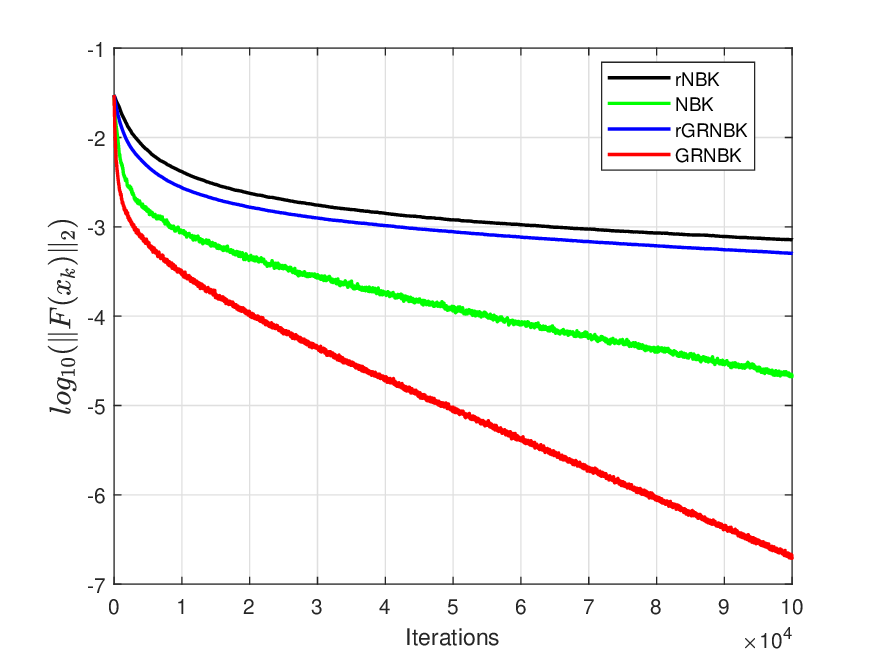}
		\end{minipage}
	}
	\subfigure[]
	{
		\begin{minipage}[t]{0.48\linewidth}
			\centering 
			\includegraphics[width=1\textwidth]{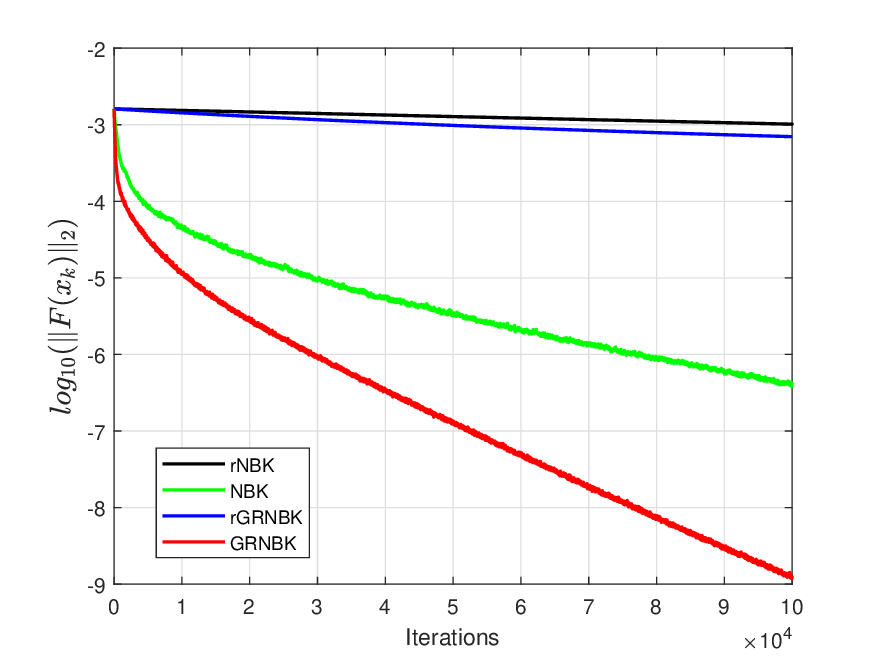}
		\end{minipage}
	} 
  \caption{ Convergence curves of residual versus iterations for Problem \ref{problem_LSPS} with Left: $A\sim\mathcal{U}([0,1])^{300\times400}$ and Right: $A\sim\mathcal{U}([0.9,1])^{300\times400}$.} 
	\label{fig:resvsitcpuEx2unif_grnbk}
\end{figure}

\subsection{Left stochastic decomposition}
\label{problem_LSD}
Consider the left stochastic decomposition problem formulated as follows:
\begin{equation}
	\label{eqn:LSD_problem}
	\text{find } X\in L^{r\times m},  \text{ s.t. } X^TX = A,
\end{equation}
where \[ L^{r,m}=\big\{P\in\mathbb{R}^{r\times m}_{\geq 0}: P^T \vmathbb{1}_r = \vmathbb{1}_m\} \] is the set of left stochastic matrices and $A\in\mathbb{R}^{r\times m}$ is a given nonnegative matrix. The problem is equivalent to the so-called {soft-K-means} problem and hence has applications in clustering \cite{2013AGKF}. Regarding \eqref{eqn:LSD_problem} as an instance of problem \eqref{eqn:problem} with component equations
\begin{equation*}
	F_{i,j}(X) = \langle X_{:,i}, X_{:,j} \rangle - A_{i,j} = 0, \qquad  i=1,...,r, \ j=1,...,m
\end{equation*}
and $C=L^{r\times m}\cong \left(\Delta^{r-1}\right)^m$, where $X_{:,i}$ represents the $i$th column of $X$.

For NBK and GRNBK method, the distance generating function is chosen from \cite[Example 3.4]{2024GLW} with the simplex-restricted negative entropy $\varphi_i=\varphi$ from see \cite[Example 3.3]{2024GLW}.
Since $F_{i,j}$ depends on at most two columns of $X$, NBK and GRNBK method act on $\Delta^{r-1}$ or $\Delta^{r-1}\times \Delta^{r-1}$ in each step. Therefore, the steps from \cite[Example 3.3]{2024GLW} is adopted in the first case, and from \cite[Example 3.5]{2024GLW} is used in the second case. 
Let $A=\hat X^T \hat X$ and the columns of $\hat X$ be sampled according to the uniform distribution on $\Delta^{r-1}$.
All iterations terminate when the norm of relative residual $\left\|F(X_k) \right\|_2\leq 10^{-5}$ or the number of iterations exceeds 300,000.

From Figure \ref{fig:resvsitcpuEx3_grnbk}, it is shown that the GRNBK method has the fastest convergence speed among all methods, which indicates the efficiency of the greedy strategy.

\begin{figure}[ htbp] 
\centering 
\vspace{-0.4cm} 
 \subfigtopskip=1pt  
\subfigbottomskip=0.1pt  
\subfigcapskip=-5pt  
	\subfigure[]
	{
		\begin{minipage}[t]{0.48\linewidth}
			\centering
			\includegraphics[width=1\textwidth]{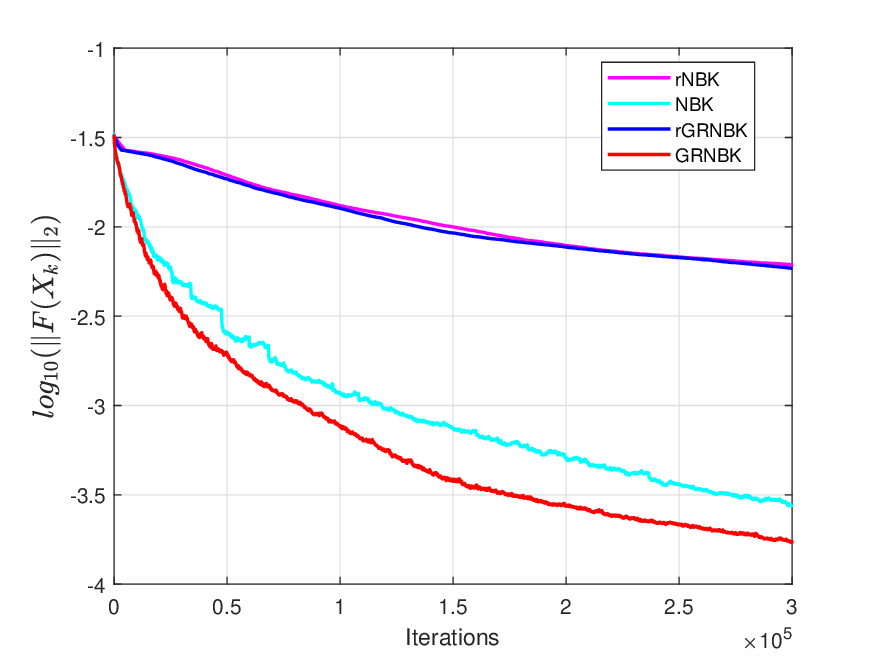}
		\end{minipage}
	}
	\subfigure[]
	{
		\begin{minipage}[t]{0.48\linewidth}
			\centering 
			\includegraphics[width=1\textwidth]{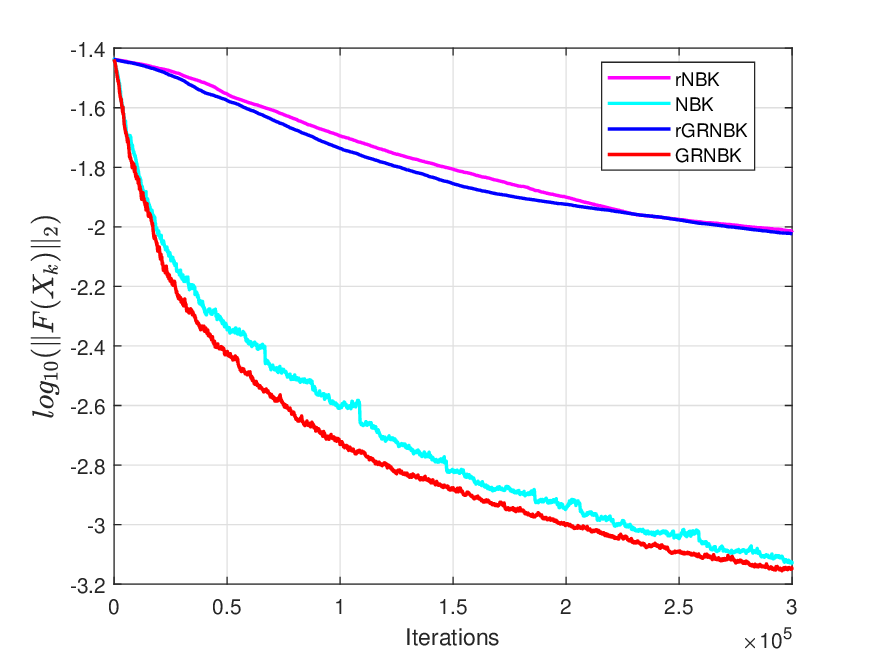}
		\end{minipage}
	} 
  \caption{ Convergence curves of residual versus iterations for Problem \ref{problem_LSD} with Left: $r=100, m=90$ and Right: $r=90, m=100$.} 	
		\label{fig:resvsitcpuEx3_grnbk}
\end{figure}

\section{Conclusions} \label{secconclu_grnbk} 
In this paper, a greedy randomized nonlinear Bregman-Kaczmarz method and its relaxed version are presented for the solution of constrained nonlinear systems. The convergence theory of the greedy randomized nonlinear Bregman-Kaczmarz method and its relaxed version are studied under the classical local tangential cone conditions. Numerical experiments by considering different constraints demonstrate the efficiency and robustness of the proposed method, which has a faster convergence speed than existing nonlinear Bregman-Kaczmarz methods.
In addition, some acceleration strategies for nonlinear Kaczmarz methods, such as averaging and sketching can be integrated into our methods. We leave these valuable topics to future work.\\

\noindent\textbf{Acknowledgements} Not Applicable.\\

\noindent\textbf{Author Contributions} All the authors contributed to the study conception, design and approval of the final manuscript.\\

\noindent\textbf{Funding} This work was supported by National Natural Science Foundation of China (No. 11971354).\\

\noindent\textbf{Availability of supporting data} No data was used for the research described in the article.

\section*{Declarations}
\noindent\textbf{Ethical Approval} Not Applicable.\\

\noindent\textbf{Competing Interests} The authors declare that they have no conflict of interest.\\

\bibliographystyle{plain}	 
\bibliography{refsGRNBK}

\begin{thebibliography}{10}

\bibitem{2013AGKF}
Raman Arora, Maya~R Gupta, Amol Kapila, and Maryam Fazel.
\newblock {Similarity-based Clustering by Left-Stochastic Matrix
  Factorization.}
\newblock {\em Journal of Machine Learning Research}, 14(7):1715--1746, 2013.

\bibitem{2021BW}
Zhong-Zhi Bai and Wen-Ting Wu.
\newblock {On greedy randomized augmented Kaczmarz method for solving large
  sparse inconsistent linear systems}.
\newblock {\em SIAM Journal on Scientific Computing}, 43(6):A3892--A3911, 2021.

\bibitem{2003BC}
Heinz~H Bauschke and Patrick~L Combettes.
\newblock {Iterating Bregman retractions}.
\newblock {\em SIAM Journal on Optimization}, 13(4):1159--1173, 2003.

\bibitem{2008DT}
Inderjit~S Dhillon and Joel~A Tropp.
\newblock {Matrix nearness problems with Bregman divergences}.
\newblock {\em SIAM Journal on Matrix Analysis and Applications},
  29(4):1120--1146, 2008.

\bibitem{2020FS}
Albert Fannjiang and Thomas Strohmer.
\newblock The numerics of phase retrieval.
\newblock {\em Acta Numerica}, 29:125--228, 2020.

\bibitem{2021GHT}
Guang-Yu Gao, Bo~Han, and Shan-Shan Tong.
\newblock {A projective two-point gradient Kaczmarz iteration for nonlinear
  ill-posed problems}.
\newblock {\em Inverse Problems}, 37(7):075007, 2021.

\bibitem{2024GLW}
Robert Gower, Dirk~A Lorenz, and Maximilian Winkler.
\newblock {A Bregman--Kaczmarz method for nonlinear systems of equations}.
\newblock {\em Computational Optimization and Applications}, 87(3):1059--1098,
  2024.

\bibitem{2021HM}
Jamie Haddock and Anna Ma.
\newblock {Greed works: An improved analysis of sampling Kaczmarz--Motzkin}.
\newblock {\em SIAM Journal on Mathematics of Data Science}, 3(1):342--368,
  2021.

\bibitem{2022JYN}
Benjamin Jarman, Yotam Yaniv, and Deanna Needell.
\newblock {Online signal recovery via heavy ball Kaczmarz}.
\newblock 1:276--280, 2022.

\bibitem{2022LZB}
Chao-Yue Liu, Li-Bin Zhu, and Mikhail Belkin.
\newblock {Loss landscapes and optimization in over-parameterized non-linear
  systems and neural networks}.
\newblock {\em Applied and Computational Harmonic Analysis}, 59:85--116, 2022.

\bibitem{2014LSW}
Dirk~A Lorenz, Frank Schopfer, and Stephan Wenger.
\newblock {The linearized Bregman method via split feasibility problems:
  analysis and generalizations}.
\newblock {\em SIAM Journal on Imaging Sciences}, 7(2):1237--1262, 2014.

\bibitem{2019FD}
Frank Sch{\"o}pfer and Dirk~A Lorenz.
\newblock {Linear convergence of the randomized sparse Kaczmarz method}.
\newblock {\em Mathematical Programming}, 173:509--536, 2019.

\bibitem{2022WLB}
Qi-Feng Wang, Wei-Guo Li, Wen-Di Bao, and Xing-Qi Gao.
\newblock {Nonlinear Kaczmarz algorithms and their convergence}.
\newblock {\em Journal of Computational and Applied Mathematics}, 399:113720,
  2022.

\bibitem{2024XYb}
A-Qin Xiao and Jun-Feng Yin.
\newblock {On averaging block Kaczmarz methods for solving nonlinear systems of
  equations}.
\newblock {\em Journal of Computational and Applied Mathematics}, 451:116041,
  2024.

\bibitem{2023XYZ}
A-Qin Xiao, Jun-Feng Yin, and Ning Zheng.
\newblock {On fast greedy block Kaczmarz methods for solving large consistent
  linear systems}.
\newblock {\em Computational and Applied Mathematics}, 42(3):119, 2023.

\bibitem{2022YLG}
Rui Yuan, Alessandro Lazaric, and Robert~M Gower.
\newblock {Sketched Newton--Raphson}.
\newblock {\em SIAM Journal on Optimization}, 32(3):1555--1583, 2022.

\bibitem{2011Y}
Ya-Xiang Yuan.
\newblock {Recent advances in numerical methods for nonlinear equations and
  nonlinear least squares}.
\newblock {\em Numerical algebra, control and optimization}, 1(1):15--34, 2011.

\bibitem{2023ZBLW}
Fei-Yu Zhang, Wen-Di Bao, Wei-Guo Li, and Qin Wang.
\newblock {On sampling Kaczmarz--Motzkin methods for solving large-scale
  nonlinear systems}.
\newblock {\em Computational and Applied Mathematics}, 42(3):126, 2023.

\bibitem{2023ZWZ}
Jian-Hua Zhang, Yu-Qing Wang, and Jing Zhao.
\newblock {On maximum residual nonlinear Kaczmarz-type algorithms for large
  nonlinear systems of equations}.
\newblock {\em Journal of Computational and Applied Mathematics}, 425:115065,
  2023.

\bibitem{2024ZLT}
Yan-Jun Zhang, Han-Yu Li, and Ling Tang.
\newblock {Greedy randomized sampling nonlinear Kaczmarz methods}.
\newblock {\em Calcolo}, 61(2):25, 2024.

\end{thebibliography}

\end{document}